\DeclareFontFamily{U}{min}{}
\DeclareFontShape{U}{min}{m}{n}{<-> udmj30}{}
\let\svthefootnote\thefootnote
\newcommand\freefootnote[1]{%
  \let\thefootnote\relax%
  \footnotetext{#1}%
  \let\thefootnote\svthefootnote%
}
\newcommand*{\faktor}[2]{%
  \raisebox{0.5\height}{\ensuremath{#1}}%
  \mkern-5mu\diagup\mkern-4mu%
  \raisebox{-0.5\height}{\ensuremath{#2}}%
} 
\newcommand\restr[2]{{%
\left.
\kern-
\nulldelimiterspace %
#1 %
\right|_{#2} %
}}
\tikzset{>=stealth}
\newtheorem{question}[theorem]{Question}
\begin{document}
\title{Various topos of 
types constructions}
\author{Kristóf Kanalas}
\date{}

\maketitle

\begin{abstract}
    We study and compare some topos of types constructions, which were defined by Garner, Joyal, Reyes and Makkai. \freefootnote{I would like to acknowledge the support of Masaryk University (project MUNI/A/1569/2024). I thank John Bourke for directing me towards this topic.}
\end{abstract}

\tableofcontents

\section{Introduction}

The aim of this paper is to study three "topos of types" constructions: the one from \cite{GARNER2020102831}, a variant of the one from \cite{joyaltoptp}, and the one from \cite{10.1007/BFb0090947}. 

Given a small coherent category $\mathcal{C}$, one defines the category of ultrafilters $Spec^{\neg }(\mathcal{C})$ as follows: its objects are pairs $(x,p)$ where $x\in \mathcal{C}$ and $p$ is an ultrafilter on the Boolean algebra of complemented subobjects of $x$. Its morphisms $[f]:(x,p)\to (y,q)$ are given by $q$-continuous $p$-germs of $x\to y$ maps (Definition \ref{specdef}). The category of prime filters $Spec(\mathcal{C})$ is defined analogously, except that now $p$ and $q$ are prime filters on the subobject lattices. 

The three topos of types constructions are given by three conservative topos-valued models

\[\begin{tikzcd}
	&& {\mathcal{C}} \\
	\\
	{Sh(Spec^{\neg}(\mathcal{C}),\tau _{E})} && {Sh(Spec(\mathcal{C}),\tau _{E})} && {Sh(Spec(\mathcal{C}),\tau _{sat})}
	\arrow[from=1-3, to=3-1]
	\arrow[from=1-3, to=3-3]
	\arrow[from=1-3, to=3-5]
	\arrow[from=3-1, to=3-3]
	\arrow[from=3-3, to=3-5]
\end{tikzcd}\]
each satisfying a certain extension property, explained in Theorem \ref{main1}, Theorem \ref{main2} and \cite[Theorem 1.1]{10.1007/BFb0090947}. The horizontal maps are described in Theorem \ref{allthree}. 

We will only discuss the first two versions in detail. The main results regarding the ultrafilter case $Sh(Spec^{\neg }(\mathcal{C}), \tau _{E})$ are summarized in Theorem \ref{main1}, which slightly extends \cite[Theorem 42]{GARNER2020102831} (they call it their final main result). The latter proves that precomposing with $\mathcal{C}\to Sh(Spec^{\neg }(\mathcal{C}), \tau _{E})$ yields an equivalence between the $\mathcal{E}$-valued models, assuming that the topos $\mathcal{E}$ is locally connected. We give a different proof, which additionally shows that if $\mathcal{E}$ is an arbitrary topos, this precomposition map is still fully faithful.

The paper \cite{GARNER2020102831} asks for an analogous treatment of the prime filter setting, which is done in Section 4. The main result here is Theorem \ref{main2}, describing the extension property of $\mathcal{C}\to Sh(Spec(\mathcal{C}),\tau _E)$.

Our initial motivation was to express properties of types, and as a result, stability-like properties, in terms of these associated toposes. Such applications are left for future work.

\section{Types}

This section is included to support the model-theoretic intuition behind the constructions. Its content will not be used in later sections in an essential way.

In model theory almost every important concept can be expressed in terms of types. Informally, a type $p$ of a model $M$, in variables $\vec{x}$, over a parameter set $A\subseteq M$, is the description of an imaginary $\vec{x}$-tuple with parameters from $A$. For example, one can describe an infinitesimal element in the structure $(\mathbb{R},<)$ by the (finitely consistent) set of formulas $\{0<x<\frac{1}{n} : n\in \mathbb{N}^+\}$. This set extends to a type of $(\mathbb{R},<)$ over the parameter set $\mathbb{Q}$. More formally, a type is an ultrafilter over the Boolean algebra of $A$-definable sets in $\vec{x}^M$. 

To describe properties of a theory $T\subseteq L_{\omega \omega }$ (rather than properties of a model $M$ of $T$), we can also define the $\vec{x}$-types of $T$. Informally; complete descriptions of $\vec{x}$-tuples, whose existence can not be disproved from $T$. Formally; ultrafilters on the Lindenbaum-Tarski algebra of $T$ in variables $\vec{x}$. That is, the Boolean algebra of $L$-formulas with free variables $\vec{x}$, modulo the equivalence relation $\varphi (\vec{x})\sim \psi (\vec{x})$ iff $T\vdash \varphi(\vec{x})\leftrightarrow \psi (\vec{x})$. 

Write $Def(\varphi (\vec{x})^M)$ for the Boolean algebra of definable subsets of $\varphi (\vec{x})^M$. A key observation of categorical logic is that these Boolean algebras glue together. Let $Def(M)$ be the subcategory of $\mathbf{Set}$ formed by definable sets (interpretations of formulas) and definable functions (whose graph is a definable set). If $\varphi (\vec{x})^M$ is any object in $Def(M)$ then its subobject lattice is precisely $Def(\varphi (\vec{x})^M)$.

Similarly, the Lindenbaum-Tarski algebras glue together into an abstract "category of formulas" $\mathcal{C}_T$, called the syntactic category of $T$. Any model $M$ of $T$ yields a functor $ev_M:\mathcal{C}_T\to \mathbf{Set}$ which takes a formula to its evaluation in $M$ and any elementary embedding $M\to N$ yields a natural transformation $ev_M\to ev_N$. The category of definable sets $Def(M)$ is the $ev_M$-image of $\mathcal{C}_T$.

As a final observation, note that $Def(M)$ is closed under some limits and colimits (in $\mathbf{Set}$). Indeed, a finite product of definable sets is definable, the equalizer of a pair of definable functions is definable, the image of a definable function is definable and the finite union of definable subsets is definable. It turns out that the formula defining these limits/ colimits is itself a limit/ colimit in $\mathcal{C}_T$ (preserved by $ev_M$).

These ideas lead to the following fact: given a theory $T\subseteq L_{\omega \omega}$, there is a small category $\mathcal{C}_T$ with finite limits, effective epi - mono factorization of arrows, and finite unions (this is called a coherent category), such that any subobject has a complement (this is called a Boolean coherent category), and such that $ev_{\bullet }:Mod(T)\to \mathbf{Coh}(\mathcal{C}_T,\mathbf{Set})$ is an equivalence (from the category of $T$-models and elementary embeddings to the category of $\mathbf{Set}$-valued coherent functors and natural transformations).

All of this has an analogue in positive logic. If $T\subseteq L^g_{\omega \omega }$ is a coherent theory (axioms are of the form $\forall \vec{x}(\varphi (\vec{x})\to \psi (\vec{x}))$ where $\varphi $ and $\psi $ are positive existential) then it can be encoded into a small coherent category $\mathcal{C}_T$ (the "category of positive existential formulas"), such that coherent functors $\mathcal{C}_T\to \mathbf{Set}$ and natural transformations are the same as $T$-models and homomorphisms. In this positive setting the Lindenbaum-Tarski algebras are distributive lattices ($\mathcal{C}_T$ is not Boolean) and as types we take prime filters. 

Every coherent category is the syntactic category of some coherent theory (essentially uniquely). Therefore, given a coherent category $\mathcal{C}$, we will think of its objects as positive existential formulas, arrows as provably functional positive existential formulas, and subobjects as implications between formulas sharing the same set of free variables. Indeed, if $\mathcal{C}$ is a syntactic category then subobjects can be represented by monos of the form
\[
[\varphi (\vec{x})]\xrightarrow{[\varphi (\vec{x})\wedge \vec{x}=\vec{x'}]}[\psi (\vec{x'})]
\]
where $T\vdash \varphi (\vec{x})\to \psi (\vec{x})$. Therefore we think of $Sub_{\mathcal{C}}:\mathcal{C}^{op}\to \mathbf{DLat}$ as the functor which sends an object $[\psi (\vec{x})]$ to the Lindenbaum-Tarski algebra of positive existential formulas below $\psi (\vec{x})$, and the composite
\[
S_{\mathcal{C}}:\mathcal{C}\xrightarrow{Sub_{\mathcal{C}}}\mathbf{DLat}^{op}\xrightarrow{Spec} \mathbf{Set} 
\]
(sending an object $x$ to the set of prime filters on its subobject lattice) will be called the type space functor of $\mathcal{C}$.

\begin{definition}
   Let $\mathcal{C}$ be a coherent category. 
   
   The Boolean type space functor of $\mathcal{C}$ is the composite $S^{\neg}_{\mathcal{C}}: \mathcal{C}\xrightarrow{Sub^{\neg }_{\mathcal{C}}} \mathbf{BAlg}^{op}\xrightarrow{Spec} \mathbf{Set}$. It takes an object $x$ to the set of ultrafilters on the Boolean algebra of its complemented subobjects, and takes $f:x\to y$ to $f_!:S^{\neg}_{\mathcal{C}}(x)\to S^{\neg}_{\mathcal{C}}(y)$ sending $p$ to $f_!p=\{v\in Sub_{\mathcal{C}}^{\neg }(y) : f^{-1}(v)\in p\}$.

   Similarly, the type space functor of $\mathcal{C}$ is the composite $S_{\mathcal{C}}: \mathcal{C}\xrightarrow{Sub_{\mathcal{C}}} \mathbf{DLat}^{op}\xrightarrow{Spec} \mathbf{Set}$. It takes an object $x$ to the set of prime filters on its subobject lattice and takes $f$ to $f_!$ defined as above.
\end{definition}

The next propositions are left as exercises (though the first one can be looked up in \cite{GARNER2020102831}). We include them for completeness, but they will not play a significant role in the subsequent sections.

\begin{proposition}
Let $\mathcal{C}$ be a coherent category.
\begin{itemize}
    \item[i)] $S^{\neg}_{\mathcal{C}}$ preserves finite disjoint unions (including $\emptyset $) and effective epimorphisms.
    \item[ii)] If $F:\mathcal{C}\to \mathbf{Set}$ preserves finite disjoint unions then it preserves complemented monos and preimages (pullbacks) of complemented monos.
    \item[iii)] If $F$ preserves finite disjoint unions then there is a natural transformation $tp:F\Rightarrow S_{\mathcal{C}}^{\neg}$ whose $x$-component takes $a\in Fx$ to its (Boolean) type: $tp_x(a)=\{u\in Sub_{\mathcal{C}}^{\neg}(x): a \in Fu\}$.
    \item[iv)] $tp$ is sum-cartesian (i.e.~naturality squares at complemented monos are pullbacks). Moreover $tp$ is the only sum-cartesian natural transformation from $F$ to $S_{\mathcal{C}}^{\neg}$.
    \item[v)] If $F$ is arbitrary, $G$ preserves finite disjoint unions and $\alpha :F\Rightarrow G$ is a natural transformation then $F$ preserves finite disjoint unions iff $\alpha $ is sum-cartesian.
    \item[vi)] As a consequence $FC(\mathcal{C},Set)$, the category of $\mathcal{C}\to \mathbf{Set}$ finite coproduct (i.e.~disjoint union) preserving functors is equivalent to the full subcategory of the slice $\faktor{[\mathcal{C},\mathbf{Set}]}{S_{\mathcal{C}}^{\neg}}$ spanned by the sum-cartesian maps. 
\end{itemize}
\end{proposition}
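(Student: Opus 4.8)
The statement is the formal upshot of parts i)--v), so the plan is to exhibit two mutually inverse functors and let the earlier parts carry the weight; in fact I expect to obtain an isomorphism of categories, which is slightly stronger than the asserted equivalence. Write $\mathcal{S}$ for the full subcategory of $\faktor{[\mathcal{C},\mathbf{Set}]}{S_{\mathcal{C}}^{\neg}}$ spanned by the sum-cartesian maps.

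First I would define $\Phi\colon FC(\mathcal{C},\mathbf{Set})\to \mathcal{S}$ on objects by $F\mapsto (F,tp)$, where $tp\colon F\Rightarrow S_{\mathcal{C}}^{\neg}$ is the canonical transformation furnished by part iii). By part iv) this $tp$ is sum-cartesian, so $(F,tp)$ genuinely lies in $\mathcal{S}$. On a natural transformation $\beta\colon F\Rightarrow F'$ of coproduct-preserving functors I set $\Phi(\beta)=\beta$; to see this is a morphism of $\mathcal{S}$ I must verify the triangle $tp'\circ\beta=tp$ over $S_{\mathcal{C}}^{\neg}$. The key observation is that $tp'\circ\beta$ is a natural transformation from $F$ to $S_{\mathcal{C}}^{\neg}$, both of which preserve finite disjoint unions ($S_{\mathcal{C}}^{\neg}$ by part i)), so part v) forces $tp'\circ\beta$ to be sum-cartesian, whereupon the uniqueness clause of part iv) yields $tp'\circ\beta=tp$ immediately. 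Note that this simultaneously shows every $\beta$ is automatically a slice morphism, so $\Phi$ is full.

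In the other direction I would take the evident forgetful functor $\Psi\colon\mathcal{S}\to FC(\mathcal{C},\mathbf{Set})$, sending $(F,\alpha)\mapsto F$ and $\beta\mapsto\beta$. This lands in $FC(\mathcal{C},\mathbf{Set})$ precisely by part v) applied with $G=S_{\mathcal{C}}^{\neg}$: since $\alpha\colon F\Rightarrow S_{\mathcal{C}}^{\neg}$ is sum-cartesian and its codomain preserves finite disjoint unions, $F$ must preserve them too. Finally I would check that $\Phi$ and $\Psi$ are mutually inverse. The composite $\Psi\Phi$ is the identity on the nose. For $\Phi\Psi$, an object $(F,\alpha)$ of $\mathcal{S}$ is sent to $(F,tp)$, and since $\alpha$ is a sum-cartesian transformation $F\Rightarrow S_{\mathcal{C}}^{\neg}$ out of a coproduct-preserving $F$, the uniqueness clause of part iv) identifies $\alpha=tp$; hence $\Phi\Psi$ is also the identity.

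The only point that needs care is the well-definedness of $\Phi$ on morphisms, i.e.~that the slice triangle commutes for an \emph{arbitrary} $\beta$ rather than only for some distinguished class; but as indicated this is immediate once one combines the iff of part v) with the uniqueness in part iv), and I anticipate no genuine obstacle beyond correctly invoking those two facts. Putting the pieces together gives the claimed isomorphism, hence equivalence, of categories.
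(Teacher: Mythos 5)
Your derivation of part vi) from parts i)--v) is correct, and it is exactly the formal step that the phrase ``as a consequence'' intends: applying v) with $G=S^{\neg}_{\mathcal{C}}$ (legitimate because $S^{\neg}_{\mathcal{C}}$ preserves finite disjoint unions by i)) shows that any natural transformation $\beta :F\Rightarrow F'$ between coproduct-preserving functors makes $tp_{F'}\circ \beta $ sum-cartesian, and the uniqueness clause of iv) then forces $tp_{F'}\circ \beta =tp_F$; this makes $F\mapsto (F,tp_F)$ and $(F,\alpha )\mapsto F$ well-defined, mutually inverse on the nose, so one even gets an isomorphism of categories, as you note. The problem is that the statement you were asked to prove is the \emph{whole} proposition, parts i)--vi), and your proposal proves only vi), explicitly taking i)--v) as given (``let the earlier parts carry the weight''). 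Those parts are the substantive content and each requires a real argument which appears nowhere in your text. For instance, i) needs, besides the identification of ultrafilters on $Sub^{\neg}_{\mathcal{C}}(x\sqcup y)\cong Sub^{\neg}_{\mathcal{C}}(x)\times Sub^{\neg}_{\mathcal{C}}(y)$ with the disjoint union of the two ultrafilter sets, a lifting argument for effective epis: given $f:x\twoheadrightarrow y$ and an ultrafilter $q$ on $Sub^{\neg}_{\mathcal{C}}(y)$, one checks that $\{f^{-1}(v):v\in q\}$ generates a proper filter (using $\exists _f f^{-1}(v)=v$, so $f^{-1}(v)=0$ forces $v=0$), extends it to an ultrafilter $p$ by the prime ideal theorem, and uses maximality of $q$ to conclude $f_!p=q$. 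Likewise iii) needs that $tp_x(a)$ is actually an ultrafilter and that $tp$ is natural, and v) needs the disjointness argument in $\mathbf{Set}$ ($\alpha _x$ sends $Fu^c$ into $Gu^c$, which is disjoint from $Gu$, whence the naturality square at $u\hookrightarrow x$ is a pullback) together with its converse. Without these, most of the proposition is unestablished.

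For context: the paper itself offers no proof of this proposition either --- it declares it ``left as an exercise'' and points to \cite{GARNER2020102831} --- so there is no argument in the paper to compare your route against. But judged as a self-contained blind proof of the stated proposition, yours is incomplete: what you wrote is a correct proof of vi) conditional on i)--v), not a proof of the proposition.
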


\begin{proposition}
    Let $\mathcal{C}$ be a coherent category.
\begin{itemize}
    \item[i)] $S_{\mathcal{C}}$ preserves monomorphisms, preimages (pullbacks of monos) and finite unions (including $\emptyset $). Call such functors "weakly coherent". Moreover $S_{\mathcal{C}}$ preserves effective epimorphisms.
    \item[ii)] If $F:\mathcal{C}\to \mathbf{Set}$ is weakly coherent then there is a natural transformation $tp:F\Rightarrow S_{\mathcal{C}}$ whose $x$-component takes $a\in Fx$ to its type: $tp_x(a)=\{u\in Sub_{\mathcal{C}}(x): a \in Fu\}$.
    \item[iii)] $tp$ is mono-cartesian (the naturality squares at monos are pullbacks; sometimes these are called "elementary natural transformations" or "immersions"). Moreover $tp$ is the only mono-cartesian natural transformation from $F$ to $S_{\mathcal{C}}$.
    \item[iv)] $F$ is arbitrary, $G$ is weakly coherent, $\alpha :F\Rightarrow G$ is a natural transformation. If $\alpha $ is mono-cartesian then $F$ is weakly coherent. The converse is false.
    \item[v)] If $F,G$ are weakly coherent and $\alpha :F\Rightarrow G$ is arbitrary then $tp_F \leq tp_G \circ \alpha $ (meaning: for any $x\in \mathcal{C}$ and $a\in Fx$ we have $(tp_F)_x(a)\leq (tp_G\circ \alpha )_x(a)$). Equality holds iff $\alpha $ is mono-cartesian. 
    \item[vi)] As a consequence $\mathbf{WCoh}(\mathcal{C},\mathbf{Set})$, the category of weakly coherent functors $\mathcal{C}\to \mathbf{Set}$ is equivalent to the following: objects are $F\xRightarrow{\tau _F} S_{\mathcal{C}}$ mono-cartesian natural transformations ($F$ is arbitrary), arrows are triangles ($\alpha $ is arbitrary):

\[\begin{tikzcd}
	F && G \\
	& {S_{\mathcal{C}}}
	\arrow["\alpha", Rightarrow, from=1-1, to=1-3]
	\arrow[""{name=0, anchor=center, inner sep=0}, "{\tau _F}"', Rightarrow, from=1-1, to=2-2]
	\arrow[""{name=1, anchor=center, inner sep=0}, "{\tau _G}", Rightarrow, from=1-3, to=2-2]
	\arrow["\leq", draw=none, from=0, to=1]
\end{tikzcd}\]
    
\end{itemize}
\end{proposition}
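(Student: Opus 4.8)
The entire proposition is organised around a single spectral computation, so I would first isolate it as a lemma: for a subobject $m\colon w\hookrightarrow x$ the map $m_!\colon S_{\mathcal C}(w)\to S_{\mathcal C}(x)$ is injective with image the ``basic set'' $\{p\in S_{\mathcal C}(x):m\in p\}$. The key inputs are that pulling back along a mono is a \emph{surjective} homomorphism of distributive lattices $m^\ast\colon Sub_{\mathcal C}(x)\to Sub_{\mathcal C}(w)$ (it has the section $\exists_m$ given by composing with $m$, identifying $Sub_{\mathcal C}(w)$ with the down-set ${\downarrow}\,m$), while pulling back along an effective epi $e$ gives an \emph{injective} $e^\ast$ (since $\exists_e\circ e^\ast=\mathrm{id}$). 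As $Spec$ turns surjections into injections and, by the prime filter theorem, injections into surjections, part (i) follows: $S_{\mathcal C}$ preserves monos and effective epis. Preservation of finite unions is then immediate from primality and properness of prime filters ($u\vee v\in p\iff u\in p$ or $v\in p$, and $0\notin p$), and preservation of preimages reduces, via the lemma and the definition of $f_!$, to the set-level identity $\{p:f^{-1}(v)\in p\}=f_!^{-1}\{q:v\in q\}$.

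For (ii) I would verify directly that $tp_x(a)=\{u:a\in\mathrm{im}(Fu)\}$ is a prime filter: upward closure and closure under $\wedge$ use that $F$ preserves monos and binary preimages, so $F(u\wedge v)=Fu\cap Fv$ inside $Fx$; properness uses $F(0)=\emptyset$; primality uses $F(u\vee v)=Fu\cup Fv$. Naturality $f_!\circ tp_x=tp_y\circ Ff$ is precisely preservation of the defining preimage square by $F$. For (iii), mono-cartesianness at $m$ asserts $Fw\cong Fx\times_{S_{\mathcal C}(x)}S_{\mathcal C}(w)$, and by the basic-set lemma the right-hand side is $\{a\in Fx:m\in tp_x(a)\}=\mathrm{im}(Fm)$, so the square is a pullback. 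Uniqueness is read off the same way: if $\sigma\colon F\Rightarrow S_{\mathcal C}$ is mono-cartesian, its pullback square at $u$ gives $\mathrm{im}(Fu)=\{a:u\in\sigma_x(a)\}$, forcing $\sigma_x(a)=\{u:a\in\mathrm{im}(Fu)\}=tp_x(a)$.

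Parts (iv) and (v) are pullback-pasting arguments. For (iv), if $\alpha\colon F\Rightarrow G$ is mono-cartesian with $G$ weakly coherent, then $F$ preserves monos because $Fm$ is a pullback of the mono $Gm$; it preserves finite unions by lifting an element of $\mathrm{im}\,G(u\vee v)=\mathrm{im}\,Gu\cup\mathrm{im}\,Gv$ back through the mono-cartesian square at $u$ or at $v$; and $F(0)=\emptyset$ since $\emptyset=G(0)\times_{Gx}Fx$. Preservation of preimages is the fiddly point: given the defining pullback $P$ of $f^{-1}(w)$, I would paste the mono-cartesian squares of $\alpha$ at the two monos $m$ and $m'=f^\ast(m)$ onto the pullback $G(P)$ and check, by the cube-pasting lemma, that both $F(f^{-1}(w))$ and $Fw'\times_{Fy}Fx$ are canonically the same fibre product $Gw'\times_{Gy}Fx$, whence $F(P)$ is a pullback. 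The claim that the converse fails I would settle with a non-immersion: in the syntactic category of a single unary predicate $R$, the unique homomorphism from the one-point model with $R$ false to the one-point model with $R$ true induces an $\alpha$ between two coherent (hence weakly coherent) functors whose naturality square at $R\hookrightarrow X$ is not a pullback. Part (v) is then quick: $tp_F\le tp_G\circ\alpha$ is immediate from naturality of $\alpha$, and equality is exactly surjectivity of the comparison map $Fw\to Fx\times_{Gx}Gw$, which holds iff the square is a pullback, using that $F$ and $G$ preserve monos.

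Finally, (vi) is formal: sending $F\mapsto(F,tp_F)$ and $\alpha\mapsto\alpha$ is well defined by (ii), (iii) and the inequality of (v); it is (essentially) surjective because any mono-cartesian $\tau_F\colon F\Rightarrow S_{\mathcal C}$ forces $F$ weakly coherent by (iv) with $G=S_{\mathcal C}$ and $\tau_F=tp_F$ by the uniqueness in (iii); and it is fully faithful because (v) shows the constraint $tp_F\le tp_G\circ\alpha$ is automatic, so the two hom-sets coincide. The main obstacle I anticipate is not a single deep step but the spectral bookkeeping: pinning down the basic-set lemma and both directions of ``$Spec$ flips surjections and injections'' (the injection-to-surjection direction genuinely uses the prime filter theorem), and then, in (iv), organising the triple pullback paste for preimages cleanly enough that the two descriptions of the fibre product visibly agree with the comparison map.
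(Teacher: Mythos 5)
The paper offers no proof to compare against: this proposition is explicitly ``left as an exercise'' (only its ultrafilter companion is referred to \cite{GARNER2020102831}), so your argument has to stand on its own --- and it does. Your decomposition is the natural one: the basic-set lemma (for a mono $m\colon w\hookrightarrow x$ the map $m_!$ is injective with image $\{p : m\in p\}$, via the identification of $Sub_{\mathcal{C}}(w)$ with the down-set below $m$) carries (i), (iii) and (v); the two halves of ``$Spec$ flips surjections and injections'' are correctly sourced, with $m^*$ split by $\exists _m$, with $e^*$ split via Frobenius and $\exists _e(1)=1$, and with the prime filter theorem entering exactly where you say it does --- this also squares with the paper's later remark in Proposition \ref{strictlyeq} that preservation of effective epis by $S_{\mathcal{C}}$ ``was left as an exercise in Section 2''. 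In (iv) the pasting argument for preimages goes through as outlined: the mono-cartesian square at $f^{-1}(w)\hookrightarrow x$ pasted onto $G$'s preimage square equals, by naturality, the $F$-preimage square pasted onto the mono-cartesian square at $w\hookrightarrow y$, and the pasting lemma cancels the latter; and your counterexample is valid, since both one-point models give coherent, hence weakly coherent, evaluation functors while the naturality square at $R\hookrightarrow X$ has empty upper-left corner but singleton pullback. The only steps I would ask you to write out in full are: (a) in (iii), that the projection $Fx\times _{S_{\mathcal{C}}(x)}S_{\mathcal{C}}(w)\to Fx$ is a bijection onto $\{a : m\in tp_x(a)\}$ --- this uses the injectivity half of the lemma, since a point of the fibre product carries a prime filter on $Sub_{\mathcal{C}}(w)$ which must be shown to be $tp_w(a)$; and (b) in (v), the direction ``equality implies mono-cartesian'', where surjectivity of the comparison map needs injectivity of $Gm$ to match the second coordinate. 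You flag both mono-preservation hypotheses there, so these are matters of detail, not of substance.
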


\begin{definition}
\label{specdef}
    $\mathcal{C}$ is coherent.

    The category of ultrafilters on $\mathcal{C}$ (or the Boolean spectrum of $\mathcal{C}$) is the category $Spec^{\neg }(\mathcal{C})$ whose objects are pairs $(x,p)$ (with $x\in \mathcal{C}, p\in S_{\mathcal{C}}^{\neg }(x)$), and whose morphisms $(x,p)\to (y,q)$ are given by continuous $p$-germs of $x\to y$ maps. A germ is the equivalence class of partial maps $f:x\supseteq u\to y$ (with $u\in p$) where $f\sim f'$ if there is some $u''\in p$ where both are defined and $\restr{f}{u''}=\restr{f'}{u''}$. It is continuous if $v\in q$ implies $f^{-1}(v)\in p$, i.e.~$q\leq f_{!}p$ hence $q=f_!p$. This doesn't depend on the representative of $[f]$.  

    Similarly, the category of prime filters on $\mathcal{C}$ (or the spectrum of $\mathcal{C}$) is the category $Spec(\mathcal{C})$ whose objects are pairs $(x,p)$ (with $x\in \mathcal{C}, p\in S_{\mathcal{C}}(x)$), and whose morphisms $(x,p)\to (y,q)$ are given by continuous $p$-germs of $x\to y$ maps (but note that in this case $q\leq f_!p$ does not imply equality).
\end{definition}

\begin{example}
    If $\mathcal{C}=L$ is a distributive lattice, then $Spec(L)=spec(L)^{op}$ where $spec(L)$ is the poset of prime filters on $L$ (ordered by $\subseteq $).
\end{example}

The idea from \cite{GARNER2020102831} is the following. It is well-known that the slice $\faktor{[\mathcal{C},\mathbf{Set}]}{S_{\mathcal{C}}^{\neg}}$ is equivalent to the presheaf category $[\int S_{\mathcal{C}}^{\neg}, \mathbf{Set}]$. Indeed, given $F\xRightarrow{\tau } S_{\mathcal{C}}^{\neg }$ the corresponding functor $\widetilde{F}$ takes $(x,p)$ to $\{a\in Fx : \tau _x(a)=p\}$ and conversely, given $G:\int S_{\mathcal{C}}^{\neg } \to \mathbf{Set}$ the corresponding functor $\widehat{G}$ sends $x$ to $\bigsqcup _{p\in S_{\mathcal{C}}^{\neg }} G(x,p)$ and the $x$-component of $\widehat{G}\Rightarrow S_{\mathcal{C}}^{\neg }$ sends each element of the coproduct to its index. Finally, one observes that $\tau $ is sum-cartesian iff $\widetilde{F}$ factors through $\int S_{\mathcal{C}}^{\neg } \to Spec^{\neg }(\mathcal{C})$.

From this we get an equivalence $FC(\mathcal{C},\mathbf{Set})\to [Spec^{\neg }(\mathcal{C}),\mathbf{Set}]$ which takes $F$ to $(x,p)\mapsto \{a \in Fx :tp_x(a)=p\} = \bigcap _{u\in p} F(u)$ and whose quasi-inverse takes $G$ to $x\mapsto \bigsqcup _p G(x,p)$. These formulas also make sense for $\mathcal{E}$-valued functors (where $\mathcal{E}$ is a Grothendieck topos). So now the plan is to check that these maps remain inverse equivalences, possibly under some extra assumptions on $\mathcal{E}$.

\section{The ultrafilter case}

\begin{definition}
    A Grothendieck topos $\mathcal{E}$ is locally connected if every object is the disjoint coproduct of connected (i.e.~$\sqcup $-indecomposable) objects.
\end{definition}

\begin{proposition}
    $\mathcal{E}$ is locally connected iff every object is the union of connected objects iff every object can be covered with connected objects.
\end{proposition}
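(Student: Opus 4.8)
The plan is to prove the cycle of implications (A) $\Rightarrow$ (B) $\Rightarrow$ (C) $\Rightarrow$ (A), where (A) is the stated definition of local connectedness, (B) is ``every object is the join of its connected subobjects'', and (C) is ``every object admits a jointly epimorphic family out of connected objects''. Throughout I rely on the standard properties of a Grothendieck topos $\mathcal{E}$: coproducts are disjoint and stable under pullback, the subobject poset $\mathrm{Sub}(X)$ is a frame (so binary meet distributes over arbitrary join), a family of subobjects is jointly epimorphic exactly when its join is the top element, the initial object $0$ is strict, and $\mathcal{E}$ is balanced. I also use that the coproduct summands of $X$ are precisely its complemented subobjects, and that an epimorphic image of a connected object is connected (a decomposition of the image pulls back, by stability of coproducts, to a decomposition of the connected source, one factor of which must vanish). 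The two easy implications are then immediate: for (A) $\Rightarrow$ (B), if $X=\coprod_i C_i$ then each summand is a subobject and their join is $X$; for (B) $\Rightarrow$ (C), a family of subobjects whose join is $X$ is jointly epimorphic.

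The real content is (C) $\Rightarrow$ (A). First I would replace each member of the covering family by its image, so that I have a \emph{set} $\mathcal{S}$ of connected subobjects $C\hookrightarrow X$ with $\bigvee \mathcal{S}=X$. I then introduce the relation ``$C$ and $C'$ are linked'' iff $C\wedge C'\neq 0$, and let $\sim$ be the equivalence relation it generates. The crucial lemma is that if a subfamily $\mathcal{T}\subseteq \mathcal{S}$ is connected for the linking relation, then $\bigvee \mathcal{T}$ is connected.

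To prove the lemma, suppose $\bigvee\mathcal{T}=D_1\sqcup D_2$. For each connected $C\in\mathcal{T}$, pulling this decomposition back along $C\hookrightarrow \bigvee\mathcal{T}$ (using stability of coproducts) gives $C=(C\wedge D_1)\sqcup(C\wedge D_2)$, so connectedness of $C$ forces $C\le D_1$ or $C\le D_2$. Two linked members cannot sit on opposite sides, since their meet would lie in $D_1\wedge D_2=0$; link-connectedness of $\mathcal{T}$ then forces every member onto a single side, whence $\bigvee\mathcal{T}\le D_1$ and $D_2=0$. Now I assemble: let $\{\mathcal{S}_\alpha\}$ be the $\sim$-classes and set $X_\alpha=\bigvee\mathcal{S}_\alpha$. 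Each $X_\alpha$ is connected by the lemma and nonzero, and $\bigvee_\alpha X_\alpha=\bigvee\mathcal{S}=X$. For $\alpha\neq\beta$, frame distributivity gives $X_\alpha\wedge X_\beta=\bigvee_{C\in\mathcal{S}_\alpha,\,C'\in\mathcal{S}_\beta}(C\wedge C')$; a nonzero term would link members of two distinct classes, which is impossible, so the $X_\alpha$ are pairwise disjoint. Finally the canonical map $\phi\colon\coprod_\alpha X_\alpha\to X$ is epi (its image is the join $X$), and computing its kernel pair via disjointness and stability of coproducts yields $\coprod_{\alpha,\beta}(X_\alpha\wedge X_\beta)$, whose off-diagonal terms vanish and whose diagonal terms are the $X_\alpha$ (as $X_\alpha\hookrightarrow X$ is mono); hence the kernel pair is the diagonal, $\phi$ is mono, and balancedness makes $\phi$ an isomorphism, exhibiting $X$ as a disjoint coproduct of connected objects.

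I expect the main obstacle to be this final direction, concentrated in two places: the linking lemma, where an infinite join of a link-connected family must be shown to remain connected, and the concluding kernel-pair computation, where pairwise-disjoint covering subobjects must be verified to genuinely assemble into a coproduct. These are exactly the steps where disjointness and universality of coproducts, together with the frame structure of $\mathrm{Sub}(X)$, are indispensable, and where the naive ``take connected components'' argument that works in $\mathbf{Set}$ has to be reconstructed intrinsically inside $\mathcal{E}$.
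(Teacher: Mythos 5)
Your proof is correct, and its overall strategy --- reduce to a covering family of connected \emph{subobjects} via images, group them by overlap, show each group has connected join, and check the groups are pairwise disjoint --- is the same as the paper's. But the execution differs at the crucial step, and the difference is in your favor. The paper works with the one-step neighbourhoods $v_i=\bigcup\{u_j : u_j\cap u_i\neq\emptyset\}$, proves each $v_i$ is connected (your linking lemma, restricted to chains of length one), and then asserts that any two $v_i$ are either equal or disjoint. As stated that assertion is false: in $Sh(\mathbb{R})$, covering $\mathbb{R}$ by the connected opens $u_n=(n,n+2)$, $n\in\mathbb{Z}$, one gets $v_n=(n-1,n+3)$, so $v_0\cap v_1=(0,3)\neq\emptyset$ while $v_0\neq v_1$. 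The equal-or-disjoint dichotomy only holds after saturating, i.e.\ after passing to the equivalence relation \emph{generated} by the linking relation, which is exactly what you do: your lemma handles linking chains of arbitrary finite length, and then pairwise disjointness of the classes $X_\alpha$ genuinely follows from frame distributivity. Your concluding kernel-pair computation (universality of coproducts plus balancedness) also makes explicit a point the paper leaves implicit, namely why pairwise disjoint covering subobjects assemble into an actual coproduct. So your argument is not merely a valid alternative; it repairs a real defect in the paper's own proof. One cosmetic point: discard the zero members of $\mathcal{S}$ at the outset (a zero subobject forms its own linking class, giving $X_\alpha=0$), or else adopt the convention that $0$ counts as a connected summand; either choice is harmless.
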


\begin{proof}
    The effective epic image of a connected object is connected, so if $x\in \mathcal{E}$ can be covered with connected objects, then it is a union of connected objects. Assume $x=\bigcup _i u_i$ where each $u_i$ is connected. Take $v_i=\bigcup \{ u_j: u_j\cap u_i \neq \emptyset \}$. Then $v_i$ is connected. Indeed, if $v_i=a\sqcup b$ then $u_i=(a\cap u_i)\sqcup (b\cap u_i)$ hence it equals one of the summands, so either $u_i\subseteq a$ or $u_i\subseteq b$. Assume $u_i\subseteq a$. Similarly, for any $u_j$ s.t.~$u_j\cap u_i\neq \emptyset$ we have $u_j\subseteq a$ or $u_j\subseteq b$. As $u_i\cap u_j \neq \emptyset $ we have $u_j\subseteq a$, meaning $v_i\subseteq a$. For any $i,j$ either $v_i=v_j$ or $v_i\cap v_j =\emptyset $. We got that $x$ is a disjoint union of connected objects.
\end{proof}

\begin{proposition}
\label{FCadjunction}
    $\mathcal{C}$ is coherent, $\mathcal{E}$ is a Grothendieck topos. Then there are maps
\[\begin{tikzcd}
	{[Spec^{\neg }(\mathcal{C}),\mathcal{E}]} && {FC(\mathcal{C},\mathcal{E})} && {[Spec^{\neg }(\mathcal{C}),\mathcal{E}]}
	\arrow["{\widehat{(-)}}", from=1-1, to=1-3]
	\arrow["{\widetilde{(-)}}", from=1-3, to=1-5]
\end{tikzcd}\]
with $\widetilde{F}(x,p)= \bigcap _{u\in p}Fu$ and $\widehat{G}(x)=\bigsqcup _{p\in S_{\mathcal{C}}^{\neg }(x)}G(x,p)$, whose composite is (isomorphic to) identity. Moreover $\widehat{(-)}$ is left adjoint to $\widetilde{(-)}$ (so we see a full coreflective subcategory). When $\mathcal{E}$ is locally connected, these maps are inverse equivalences. 
\end{proposition}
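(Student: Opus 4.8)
The plan is to prove the three assertions in order, treating the first two as essentially formal consequences of the Grothendieck-construction picture and reserving the real work for local connectedness. The driving structural observation is this: if $\iota:u\hookrightarrow x$ is a complemented mono and $q\in S^{\neg}_{\mathcal{C}}(u)$, then the germ of $\iota$ is an \emph{isomorphism} $(u,q)\xrightarrow{\sim}(x,\iota_! q)$ in $Spec^{\neg}(\mathcal{C})$, with inverse the germ of the partial identity $x\supseteq u\to u$ (which is defined since $u\in\iota_! q$). Since a finite-coproduct-preserving functor preserves complemented monos and their pullbacks, applying these isomorphisms shows that for $G:Spec^{\neg}(\mathcal{C})\to\mathcal{E}$ the map $\widehat{G}(u)\to\widehat{G}(x)$ induced by $\iota$ is exactly the inclusion of the sub-coproduct $\bigsqcup_{p\ni u}G(x,p)$ into $\bigsqcup_{p}G(x,p)$. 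Writing $S_A:=\bigsqcup_{p\in A}G(x,p)\le\widehat{G}(x)$, disjointness and universality of coproducts in a Grothendieck topos give $S_{\bigcup_i A_i}=\bigvee_i S_{A_i}$ and $S_{A^c}=\neg S_A$, so by De Morgan in $Sub(\widehat{G}(x))$ the assignment $A\mapsto S_A$ sends arbitrary intersections to intersections. For the first claim I would then compute $\widetilde{\widehat{G}}(x,p_0)=\bigcap_{u\in p_0}\widehat{G}(u)=\bigcap_{u\in p_0}S_{\{p\,:\,u\in p\}}=S_{\bigcap_{u\in p_0}\{p\,:\,u\in p\}}$, whose index set is $\{p:p\supseteq p_0\}=\{p_0\}$ by maximality of ultrafilters, giving $\widetilde{\widehat{G}}(x,p_0)\cong G(x,p_0)$; naturality in $(x,p_0)$ is routine.

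For the adjunction $\widehat{(-)}\dashv\widetilde{(-)}$ I would exhibit the hom-bijection directly. Given $\beta:G\Rightarrow\widetilde{F}$, post-compose each $\beta_{x,p}:G(x,p)\to\widetilde{F}(x,p)$ with the mono $\widetilde{F}(x,p)\hookrightarrow Fx$ and copair over $p$ to get $\alpha:\widehat{G}\Rightarrow F$. Conversely, given $\alpha:\widehat{G}\Rightarrow F$, naturality of $\alpha$ at the complemented mono $u\hookrightarrow x$, restricted to the summand $G(x,p)\cong G(u,p|_u)\le\widehat{G}(u)$, shows that $\alpha_x|_{G(x,p)}$ factors through $Fu$ for every $u\in p$, hence through $\widetilde{F}(x,p)=\bigcap_{u\in p}Fu$; this defines $\beta_{x,p}$. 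Naturality of $\beta$ over a general continuous germ follows by representing the germ by a genuine arrow of $\mathcal{C}$ defined on a member of $p$ (using the isomorphism above) and invoking naturality of $\alpha$. The two assignments are mutually inverse because $\widetilde{F}(x,p)\hookrightarrow Fx$ is monic, and naturality of the bijection in $G$ and $F$ is immediate. Combined with the first claim the unit is invertible, so $\widehat{(-)}$ is fully faithful and we obtain a coreflective subcategory.

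Finally, assume $\mathcal{E}$ is locally connected; it remains to show the counit $\epsilon_F:\widehat{\widetilde{F}}\to F$, namely $\bigsqcup_{p}\widetilde{F}(x,p)\to Fx$, is an isomorphism. Write $Fx=\bigsqcup_{j\in J}c_j$ with each $c_j$ connected. The key point is that a map from a connected object into a finite coproduct factors through a unique summand; applying this to $c_j\hookrightarrow Fx=Fu\sqcup F(u^c)$ for each complemented $u\le x$ yields $p_j:=\{u:c_j\le Fu\}\in S^{\neg}_{\mathcal{C}}(x)$ (here $F$ preserving preimages and the disjointness $Fu\cap F(u^c)=F(u\wedge u^c)=0$ show $p_j$ is an ultrafilter). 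Grouping gives $Fx=\bigsqcup_{p}\big(\bigsqcup_{j:\,p_j=p}c_j\big)$, and I would identify the inner coproduct with $\widetilde{F}(x,p)$: the inclusion $\subseteq$ holds since $p_j=p$ forces $c_j\le Fu$ for all $u\in p$; for $\supseteq$, any $c_j$ with $p_j\neq p$ has $c_j\le F(u^c)$ for some $u\in p$, so $\widetilde{F}(x,p)\cap c_j\le Fu\cap F(u^c)=0$, whence $\widetilde{F}(x,p)=\bigsqcup_j(\widetilde{F}(x,p)\cap c_j)=\bigsqcup_{j:\,p_j=p}c_j$ by universality of coproducts. Thus $\epsilon_{F,x}$ is an isomorphism for every $x$, and naturality is automatic since $\epsilon_F$ is the counit.

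The main obstacle is this last step: the material before it is bookkeeping around the Grothendieck construction, but the identification $\widetilde{F}(x,p)=\bigsqcup_{j:\,p_j=p}c_j$ genuinely requires local connectedness. Without a decomposition of $Fx$ into indecomposables there is no reason for it to split as the coproduct of the type-fibres $\bigcap_{u\in p}Fu$, and indeed the counit fails to be invertible for general $\mathcal{E}$, which is exactly why only a coreflection survives in that generality.
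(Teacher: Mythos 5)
Your proposal is correct and takes essentially the same route as the paper's proof: the unit is inverted by identifying $\bigcap_{u\in p_0}\widehat{G}(u)$ with the single summand $G(x,p_0)$ via disjointness of the coproduct summands, the adjunction is established by the same restrict-to-summands/copair bijection, and the counit is inverted under local connectedness by assigning types to the connected pieces of $Fx$ and matching them with the fibres $\widetilde{F}(x,p)$, which is exactly the paper's argument. The differences are only presentational (your De Morgan bookkeeping versus the paper's direct disjointness claim, and your direct coproduct regrouping versus the paper's mono-plus-epi check); the paper additionally writes out the routine verification that $\widehat{G}$ preserves finite disjoint unions, which in your setup is an immediate consequence of your observation that $\widehat{G}$ sends complemented monos to sub-coproduct inclusions.
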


\begin{proof}
    $\widetilde{F}$ is a functor: If $[f]:(x,p)\to (y,q)$ is an arrow then $q=f_!p$, so whenever $v\in q$, its preimage $f^{-1}(v)\in p$. It follows that $\restr{F(f)}{\bullet }:\bigcap _{u\in p}Fu \to \bigcap _{v\in q}Fv$ makes sense and does not depend on the representative of $[f]$.

    $\widetilde{(-)}$ is a functor: If $\alpha :F\Rightarrow F'$ is a natural transformation then $\alpha _x: Fx\to F'x$ restricted to $Fu$ lands in $F'u$ hence $\restr{\alpha _x}{\bullet }:\bigcap _{u\in p}Fu \to \bigcap _{u\in p}F'u$ makes sense. The naturality squares commute as they are restrictions of the naturality squares of $\alpha $.

    $\widehat{G}$ is a functor: Given $f:x\to y$, $\widehat{G}(f):\bigsqcup _{p\in S_{\mathcal{C}}^{\neg }(x)}G(x,p) \to \bigsqcup _{q\in S_{\mathcal{C}}^{\neg }(y)}G(y,q)$ is induced by the maps $G([f]):G(x,p)\to G(y,f_!p)$.

    $\widehat{G}$ preserves finite disjoint unions: $\emptyset \in \mathcal{C}$ is sent to an empty coproduct (since $Sub^{\neg }_{\mathcal{C}}(\emptyset )=*$ has no ultrafilters). Given $x\to x\sqcup y \leftarrow y$ its $\widehat{G}$-image is the disjoint coproduct:

\[
\adjustbox{width=\textwidth}{
\begin{tikzcd}
	{\bigsqcup _{p'\in S_{\mathcal{C}}^{\neg }(x)}G(x,p')} && {\bigsqcup _{p'\in S_{\mathcal{C}}^{\neg }(y)}G(y,p')} \\
	{\bigsqcup _{p\in [x]\subseteq S_{\mathcal{C}}^{\neg }(x\sqcup y )}G(x\sqcup y,p)} & {\bigsqcup _{p\in S_{\mathcal{C}}^{\neg }(x\sqcup y )}G(x\sqcup y,p)} & {\bigsqcup _{p\in [y]\subseteq S_{\mathcal{C}}^{\neg }(x\sqcup y )}G(x\sqcup y,p)}
	\arrow[Rightarrow, no head, from=1-1, to=2-1]
	\arrow[Rightarrow, no head, from=1-3, to=2-3]
	\arrow[from=2-1, to=2-2]
	\arrow[from=2-3, to=2-2]
\end{tikzcd}
}
\]
as $G$ preserves the isomorphism $[1_{x\sqcup y}]:(x,\restr{p}{x})\to (x\sqcup y, p)$.

$\widehat{(-)}$ is a functor: Given $\beta :G\Rightarrow G'$, the $x$-component of the corresponding natural transformation is $\bigsqcup _p G(x,p)\xrightarrow{\bigsqcup _p \beta _{(x,p)}} \bigsqcup _p G'(x,p)$.

$\widetilde{(-)}\circ \widehat{(-)}\cong id$: We need that the dashed mono in
\[\begin{tikzcd}
	{\bigcap _{u\in p}\bigsqcup _{q\in S_{\mathcal{C}}^{\neg } (u)} G(u,q)=\bigcap _{u\in p}\bigsqcup _{q\in [u]\subseteq S_{\mathcal{C}}^{\neg } (x)} G(x,q)} && {\bigsqcup _{q\in S_{\mathcal{C}}^{\neg } (x)} G(x,q)} \\
	&& {G(x,p)}
	\arrow[hook, from=1-1, to=1-3]
	\arrow[dashed, hook', from=2-3, to=1-1]
	\arrow[hook, from=2-3, to=1-3]
\end{tikzcd}\]
is epi. (Naturality is immediate.) This is clear, since the horizontal subobject is disjoint from each $G(x,p')$ when $p'\neq p$, as for some $u\in p$ we must have $p'\not \in [u]$.

$\widehat{(-)}\dashv \widetilde{(- )}$: Given a natural transformation $\widehat{G}\Rightarrow F$, restricting its $x$-component $\widehat{G}(x)=\bigsqcup _{p\in S_{\mathcal{C}}^{\neg }(x)}G(x,p) \to Fx$ to one of the $G(x,p)$'s will factor through each $Fu$ for $u\in p$, hence it factors through $\widetilde{F}(x,p)=\bigcap _p Fu$. Conversely, given a natural transformation $G\Rightarrow \widetilde{F}$ taking the disjoint coproduct of its $(x,p)$-components for $p\in  S_{\mathcal{C}}^{\neg }(x)$ yields a natural transformation $\widehat{G}\Rightarrow F$. These maps are mutual inverses, and they are natural in $F$ and in $G$.

$\widehat{(-)}\circ \widetilde{(-)}\cong id$: We need that $\bigsqcup _{p\in S_{\mathcal{C}}^{\neg}(x)} \bigcap _{u\in p} Fu  \hookrightarrow Fx$ is an epimorphism (it is mono since for $p\neq p'$ the subobjects $\bigcap _{u\in p} Fu$ and $\bigcap _{u'\in p'} Fu'$ are disjoint and the naturality squares commute as $\widehat{(-)}\circ \widetilde{(-)}$ on $\alpha :F\Rightarrow F'$ is its restriction). As now $\mathcal{E}$ is assumed to be locally connected, we can write $Fx=\bigcup a_i $ where each $a_i $ is $\sqcup $-indecomposable, hence for any complemented $u\hookrightarrow x$ either $a_i\subseteq Fu$ or $a_i\subseteq Fu^c$. Clearly $p_i=tp(a_i)=\{u: a_i\subseteq Fu \}$ is an ultrafilter and $a_i\subseteq \bigcap _{u\in p_i} Fu$.
\end{proof}

\begin{definition}
\label{flat}
    $\mathcal{C}$ is arbitrary, $\mathcal{E}$ is a Grothendieck topos. $F:\mathcal{C}\to \mathcal{E}$ is flat if for any finite diagram coming from $\mathcal{C}$, its limit (in $\mathcal{E}$) is covered by the maps induced by the cones that are coming from $\mathcal{C}$.

\[
\adjustbox{scale=0.85}{
\begin{tikzcd}
	& Fa && {Fa'} \\
	&& lim \\
	{Fx_i} &&&& \dots \\
	&& {Fx_j}
	\arrow[dashed, from=1-2, to=2-3]
	\arrow["{F(\bullet)}"', color={rgb,255:red,196;green,196;blue,196}, curve={height=6pt}, from=1-2, to=3-1]
	\arrow[color={rgb,255:red,196;green,196;blue,196}, curve={height=-12pt}, from=1-2, to=3-5]
	\arrow[color={rgb,255:red,196;green,196;blue,196}, curve={height=6pt}, from=1-2, to=4-3]
	\arrow[dashed, from=1-4, to=2-3]
	\arrow[color={rgb,255:red,196;green,196;blue,196}, curve={height=18pt}, from=1-4, to=3-1]
	\arrow[color={rgb,255:red,196;green,196;blue,196}, curve={height=-6pt}, from=1-4, to=3-5]
	\arrow[color={rgb,255:red,196;green,196;blue,196}, curve={height=-6pt}, from=1-4, to=4-3]
	\arrow[from=2-3, to=3-1]
	\arrow[from=2-3, to=3-5]
	\arrow[from=2-3, to=4-3]
	\arrow[from=3-1, to=3-5]
	\arrow["{F(\bullet)}"', from=3-1, to=4-3]
	\arrow[from=4-3, to=3-5]
\end{tikzcd}
}
\]
\end{definition}

For topos-valued flat functors a possible reference is \cite[Sections VII.8 and VII.9]{sheaves}.

\begin{remark}
When $\mathcal{E}=\mathbf{Set}$ this expresses that $(\int F)^{op}$ is filtered.
\end{remark}

\begin{proposition}
    If $\mathcal{C}$ has finite limits then $F$ is flat iff it is lex.
\end{proposition}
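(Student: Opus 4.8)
The plan is to prove both implications, the forward one being immediate and the reverse one carrying all the content. Throughout, fix a finite diagram $D\colon J\to \mathcal{C}$, let $L=\lim_{\mathcal{C}}D$ with limiting cone $(\lambda_j\colon L\to D_j)_{j\in J}$, and write $\kappa\colon FL\to \lim_{\mathcal{E}}FD$ for the canonical comparison map, i.e.~the one induced by the cone $F\lambda$; its components satisfy $\pi_j\circ\kappa=F\lambda_j$. For lex $\Rightarrow$ flat: if $F$ preserves finite limits then $F\lambda$ is itself a limiting cone, so $\kappa$ is an isomorphism. A single isomorphism already covers $\lim FD$, and it comes from a cone "coming from $\mathcal{C}$" (namely $\lambda$ itself), so $F$ is flat.

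For flat $\Rightarrow$ lex I will show that $\kappa$ is an isomorphism for every finite $D$; since $\mathcal{E}$ is balanced, it suffices to prove $\kappa$ is both a cover (epimorphism) and a monomorphism. The epimorphism is where flatness is used directly: every cone $a\to D$ in $\mathcal{C}$ factors uniquely through the limiting cone, so the map $Fa\to \lim FD$ it induces factors through $\kappa$. As flatness asserts that these induced maps jointly cover $\lim FD$, and they all pass through $\kappa$, the map $\kappa$ is a cover.

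The monomorphism is the crux, and I would argue with generalized elements, invoking flatness only for the two special shapes "two points" (giving binary products) and "one parallel pair" (giving equalizers), both instances of the hypothesis. Take $s,t\colon A\rightrightarrows FL$ with $\kappa s=\kappa t$, equivalently $F\lambda_j\,s=F\lambda_j\,t$ for every $j\in J$; I must show $s=t$, and since this may be checked after passing to a cover of $A$, I am free to refine $A$ repeatedly. First, flatness for the two-object diagram $\{L,L\}$ produces, after a cover, a factorization $s=Ff\cdot\beta$ and $t=Fg\cdot\beta$ with $f,g\colon c\to L$ in $\mathcal{C}$ and $\beta\colon A\to Fc$, whence $F(\lambda_j f)\,\beta=F(\lambda_j g)\,\beta$ for each $j$. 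Now I process the finitely many $j\in J$ one at a time: flatness for the parallel pair $\lambda_j f,\lambda_j g\colon c\to D_j$ lets me, after a further cover, factor $\beta$ through some $Fw$ with $w\colon c'\to c$ satisfying $\lambda_j fw=\lambda_j gw$ in $\mathcal{C}$. Replacing $c,f,g,\beta$ by $c',fw,gw$ and the new witness preserves every equality arranged so far (all of them being stable under precomposition with $w$) and adds the new one $\lambda_j(fw)=\lambda_j(gw)$. Since $J$ has finitely many objects, after finitely many refinements I reach $f,g\colon c\to L$ with $\lambda_j f=\lambda_j g$ for all $j$. The legs of a limiting cone are jointly monic, so $f=g$, and therefore $s=Ff\,\beta=Fg\,\beta=t$ on this cover, giving $s=t$.

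The main obstacle is exactly this monomorphism step: it cannot be extracted from flatness on $D$ alone, since $\kappa$ need not be split and its kernel pair is not manifestly a limit preserved by $F$ (assuming that would be circular). The device that unlocks it is to trade the single complicated limit $L$ for its defining feature — the joint monicity of the legs $\lambda_j$ — and to realise the passage "agrees after every $\lambda_j$ $\Rightarrow$ agrees" through finitely many applications of the equalizer case of flatness, each absorbed into a refinement of the cover. This uses in an essential way that $J$ is finite and that covering families in a Grothendieck topos compose and are stable under pullback.
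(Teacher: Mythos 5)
The paper states this proposition without proof (it is treated as a standard fact, with the surrounding text pointing to Mac Lane--Moerdijk VII.8--9 for topos-valued flat functors), so there is no official argument to compare against; judged on its own, your proof is correct and complete. The easy direction and the epimorphism half of the comparison map $\kappa\colon FL\to\lim FD$ are handled exactly as one should: every cone from $\mathcal{C}$ factors through the limiting cone, so the covering family supplied by flatness factors through $\kappa$, forcing $\kappa$ epi. The monomorphism half is the genuinely nontrivial part, and your generalized-element argument works: testing $s,t\colon A\rightrightarrows FL$ with $\kappa s=\kappa t$, using the pair-of-objects instance of flatness to put $s,t$ in the form $Ff\beta$, $Fg\beta$ on a cover, and then absorbing each condition $F(\lambda_jf)\beta=F(\lambda_jg)\beta$ into a further refinement via the parallel-pair instance, is sound; the finiteness of $J$, the stability of the already-arranged equations under precomposition, the joint monicity of the legs $\lambda_j$, and the fact that a Grothendieck topos is balanced are all invoked correctly. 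It is worth noting that your refinement device --- pulling covering families back along comparison maps and composing the resulting trees of covers --- is precisely the machinery the paper deploys in its proof of the \emph{next} proposition (the reduction of flatness to the empty diagram, pairs of objects, and parallel pairs), so your argument sits naturally alongside the paper's toolkit; indeed, combining that reduction with your argument would shorten the bookkeeping, since you only ever use those three shapes of diagrams anyway.
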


\begin{proposition}
    In Definition \ref{flat} it suffices to check the following diagrams: the empty diagram, a pair of objects and a parallel pair of arrows.
\end{proposition}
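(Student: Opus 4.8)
The plan is to prove the nontrivial direction: assuming that $F$ has the covering property of Definition \ref{flat} for the three listed shapes (the empty diagram, a discrete pair, and a parallel pair), I will deduce it for every finite diagram; the reverse implication is trivial since the three shapes are themselves finite diagrams. These three conditions are exactly the topos-level analogue of the three axioms making $(\int F)^{op}$ filtered in the $\mathbf{Set}$ case. Throughout I use two standard facts about the topos $\mathcal{E}$: jointly epimorphic families are stable under pullback, and a composite of jointly epimorphic families is jointly epimorphic. I also use that a finite limit in $\mathcal{E}$ is cut out of a finite product by the equations of the diagram's arrows, which I impose one arrow at a time.

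\emph{First step (finite products).} The empty-diagram condition says precisely that $\{Fa \to 1\}_{a\in\mathcal{C}}$ covers the terminal object. Together with the pair condition I would prove, by induction on $n$, that the $F$-images of cones over a discrete diagram $x_1,\dots,x_n$ cover $\prod_i Fx_i$. The cases $n=0,1,2$ are the empty condition, triviality, and the pair condition. For the step, take a cone $a$ over $x_1,\dots,x_{n-1}$ (whose images cover $\prod_{i<n}Fx_i$) and apply the pair condition to $(a,x_n)$: spans $a\leftarrow c\to x_n$ give cones $c$ over all of $x_1,\dots,x_n$, and chaining the two jointly epic families — pull the $a$-family back along the projection $\prod_{i<n}Fx_i\times Fx_n \to \prod_{i<n}Fx_i$, then compose — yields the claim.

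\emph{Main step (general finite limits).} Write the objects of $D$ as $x_i$ and its arrows as $\alpha$, so the limit is the subobject $L\hookrightarrow \prod_i Fx_i$ cut out by the equations $\pi_{t\alpha}=F\alpha\circ\pi_{s\alpha}$. For a set $S$ of arrows let $L_S\subseteq\prod_i Fx_i$ impose only the equations with $\alpha\in S$, and let $D_S$ be the diagram with all objects and the arrows in $S$; a cone over $D_S$ is an object $c$ with legs $\lambda_i:c\to x_i$ commuting with the arrows of $S$. I would show by induction on $|S|$ that the images of cones over $D_S$ cover $L_S$, the base case $S=\emptyset$ being the first step. To add an arrow $\beta:x_k\to x_l$, note that pulling $L_{S\cup\{\beta\}}\hookrightarrow L_S$ back along a cone map $Fc\to L_S$ gives exactly $\mathrm{Eq}(F\lambda_l,F(\beta\lambda_k))\subseteq Fc$, since $\pi_{t\beta}$ and $F\beta\circ\pi_{s\beta}$ have images $F\lambda_l$ and $F(\beta\lambda_k)$ under the components $F\lambda_i$. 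The parallel-pair condition applied to $\beta\lambda_k,\lambda_l:c\rightrightarrows x_l$ then says the images of maps $e:c'\to c$ with $\beta\lambda_k e=\lambda_l e$ cover this equalizer, and each such $c'$ with legs $\lambda_i e$ is a cone over $D_{S\cup\{\beta\}}$ refining $c$. Pulling $\{Fc\to L_S\}$ back along $L_{S\cup\{\beta\}}\hookrightarrow L_S$ (stability) and composing with these parallel-pair covers (composition) exhibits $L_{S\cup\{\beta\}}$ as covered by cones over $D_{S\cup\{\beta\}}$. Taking $S$ to be all arrows gives $L=\lim FD$ covered by cones over $D$.

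The bookkeeping in this inductive step is the main obstacle: one must verify that the pullback of the single added equation along $Fc\to L_S$ is genuinely the equalizer $\mathrm{Eq}(F\lambda_l,F(\beta\lambda_k))\subseteq Fc$ to which the parallel-pair condition literally applies, and that the refined cones $c'$ land in $L_{S\cup\{\beta\}}$ rather than only in $L_S$. Once these subobject identifications are pinned down, the remainder is the formal stability-and-composition calculus for jointly epimorphic families.
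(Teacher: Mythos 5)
Your proof is correct and takes essentially the same route as the paper: both express a finite limit as the subobject of the product cut out by one equalizer condition per arrow, handle the product by induction on the pair-of-objects condition, and then impose the equations one at a time using pullback-stability and composition of jointly epimorphic families, with the key identification that the pulled-back equation over a cone $Fc$ is exactly $\mathrm{Eq}(F\lambda _l, F(\beta \lambda _k))$, to which the parallel-pair condition applies. The only difference is organizational: the paper first proves a standalone lemma on joint equalizers of finitely many pairs with common source and then pulls the whole limit back over each cone covering the product, whereas you run a single induction over subsets of the diagram's arrows --- the same mechanism, arranged globally rather than locally.
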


\begin{proof}
    First note that we get the condition for joint equalizers. Indeed, let $f_i,g_i:x\to y_i$ $(i=1,\dots n)$ be a finite set of parallel pairs. Let $i_1:e_1\hookrightarrow Fx$ be the equalizer of the first pair $(Ff_1,Fg_1)$. There is a cover $(Fa_i\to e_1)_i$ such that each composite $Fa_i\to e_1\hookrightarrow Fx$ is coming from $\mathcal{C}$, say, it is $Fh_i$. 

    Let $i_2:e_2\hookrightarrow e_1$ be the equalizer of $(Ff_2 \circ i_1, Fg_2\circ i_1)$. Then $e_2\hookrightarrow e_1\hookrightarrow Fx$ is the joint equalizer of the first two pairs. It is easy to check that the pullback of $i_2$ along $Fa_i\to e_1$, say $e_{2,i}\hookrightarrow Fa_i$, is the equalizer of $(Ff_2\circ Fh_i, Fg_2\circ Fh_i)$, hence it can be covered with a family $(Fb_{i,j}\to e_{2,i})_j$, such that each composite $F{b_{i,j}}\to e_{2,i}\to Fa_i$ is coming from $\mathcal{C}$, say, it is $Fk_{i,j}$.

\[
\adjustbox{scale=0.85}{
\begin{tikzcd}
	{Fb_{i,j}} \\
	\\
	{e_{2,i}} && {Fa_i} \\
	& pb &&&&& {Fy_1} \\
	{e_2} && {e_1} && Fx \\
	&&&&&& {Fy_2}
	\arrow[from=1-1, to=3-1]
	\arrow["{Fk_{i,j}}"{description}, from=1-1, to=3-3]
	\arrow[hook, from=3-1, to=3-3]
	\arrow[from=3-1, to=5-1]
	\arrow[from=3-3, to=5-3]
	\arrow["{Fh_i}"{description}, from=3-3, to=5-5]
	\arrow[hook, from=5-1, to=5-3]
	\arrow[hook, from=5-3, to=5-5]
	\arrow["{Fg_1}"{description, pos=0.7}, shift right=2, from=5-5, to=4-7]
	\arrow["{Ff_1}"{description}, shift left=2, from=5-5, to=4-7]
	\arrow["{Fg_2}"{description, pos=0.7}, shift right=2, from=5-5, to=6-7]
	\arrow["{Ff_2}"{description}, shift left=2, from=5-5, to=6-7]
\end{tikzcd}
}
\]

As the pullback of a covering family is a covering family, $(e_{2,i}\to e_2)_i$ is a cover. As the composite of a height 2 tree of covering families is a cover, we get that $(Fb_{i,j}\to e_{2,i}\to e_2)_{i,j}$ is a cover. By induction we get that the condition holds for finite joint equalizers.

In a similar fashion we get that the condition holds for finite products. Given an arbitrary diagram, its limit can be computed as the joint equalizer of the pairs 
\[
\adjustbox{scale=0.85}{
\begin{tikzcd}
	{\prod _i Fx_i} &&&& {Fx_{i_1}} \\
	&& {Fx_{i_0}}
	\arrow["{\pi _{i_1}}", from=1-1, to=1-5]
	\arrow["{\pi _{i_0}}"', from=1-1, to=2-3]
	\arrow["Ff"', from=2-3, to=1-5]
\end{tikzcd}
}
\]
where $Ff$ is an arrow from the diagram (and $\prod _i Fx_i$ is the product of the objects in the diagram). There is a cover $(\prod Fx_i \leftarrow Fa_j)_j$ such that each composite $Fa_j\to \prod Fx_i \xrightarrow{\pi _{i_0}}Fx_{i_0}$ is coming from $\mathcal{C}$, say, it is $Fh_{j,i_0}$. In
\[
\adjustbox{scale=0.85}{
\begin{tikzcd}
	&& lim && {e_j} && {Fb_{j,t}} \\
	&&& pb \\
	&& {\prod _i Fx_i} && {Fa_j} \\
	& {Fx_{i_0}} \\
	{Fx_{i_1}}
	\arrow[hook', from=1-3, to=3-3]
	\arrow[from=1-5, to=1-3]
	\arrow[hook', from=1-5, to=3-5]
	\arrow[from=1-7, to=1-5]
	\arrow["{Fk_{t,j}}"{description}, from=1-7, to=3-5]
	\arrow["{\pi _{i_0}}"', shift right, from=3-3, to=4-2]
	\arrow["{\pi _{i_1}}", shift left=4, from=3-3, to=5-1]
	\arrow[from=3-5, to=3-3]
	\arrow["{Fh_{j,i_0}}"{description}, curve={height=-6pt}, from=3-5, to=4-2]
	\arrow["{Fh_{j,i_1}}"{description}, shift left, curve={height=-24pt}, from=3-5, to=5-1]
	\arrow["Ff"', shift right, from=4-2, to=5-1]
\end{tikzcd}
}
\]
it is easy to check that the pullback of $lim$ (a joint equalizer) is the joint equalizer of the pairs $(Ff\circ Fh_{j,i_0}, Fh_{j,i_1})$. Therefore it can be covered as $(e_j\leftarrow Fb_{j,t})_t$ such that each composite $Fb_{j,t}\to e_j\to Fa_j$ is coming from $\mathcal{C}$. Then $(lim \leftarrow e_j \leftarrow Fb_{j,t})_{j,t}$ is the cover we were looking for.
\end{proof}

\begin{proposition}
\label{restrictstoflat}
    If $F$ is lex then $\widetilde{F}$ is flat, assuming $\mathcal{E}$ is locally connected. If $G$ is flat then $\widehat{G}$ is flat (hence lex), for arbitrary $\mathcal{E}$
\end{proposition}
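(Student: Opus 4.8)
The plan is to verify flatness for both $\widetilde{F}$ and $\widehat{G}$ by invoking the preceding reduction proposition, so that in each case it suffices to check the three test diagrams: the empty diagram, a pair of objects, and a parallel pair of arrows. Throughout, the decisive device is twofold. First, finite products and equalizers in a topos distribute over the coproduct decomposition $\widehat{G}(x)=\bigsqcup_{p}G(x,p)$ (resp.\ over the connected components of $\widetilde{F}(x,p)$), so the bookkeeping can be done one summand/component at a time. Second, a covering family produced by germs of $Spec^{\neg}(\mathcal{C})$ can be converted into one coming from genuine $\mathcal{C}$-morphisms by restricting each germ to a common complemented domain lying in the relevant ultrafilter, using the isomorphism $(u,\restr{p}{u})\cong(x,p)$ for complemented $u\in p$.

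For the second statement I would start from $\widehat{G}(x)\times\widehat{G}(y)=\bigsqcup_{p,q}G(x,p)\times G(y,q)$ and $\widehat{G}(0)=\emptyset$; the empty-diagram condition for $\widehat{G}$ is then literally the one for $G$. For a pair $(p,q)$, flatness of $G$ covers $G(x,p)\times G(y,q)$ by objects $G(z,r)$ equipped with a span $(x,p)\xleftarrow{[h]}(z,r)\xrightarrow{[k]}(y,q)$; representing $h,k$ by partial maps and intersecting their complemented domains to a single $w\in r$ produces an honest span $x\leftarrow w\to y$ in $\mathcal{C}$ together with $G(w,\restr{r}{w})\cong G(z,r)$, and the restriction of $\widehat{G}(w)\to\widehat{G}(x)\times\widehat{G}(y)$ to this summand recovers the original covering map. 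For a parallel pair $f,g\colon x\to y$, I would observe that $\widehat{G}(f),\widehat{G}(g)$ send the summand $G(x,p)$ into $G(y,f_!p)$ and $G(y,g_!p)$ respectively, so their equalizer is empty over those $p$ with $f_!p\neq g_!p$ and equals $\mathrm{Eq}(G[f],G[g])$ over those with $f_!p=g_!p$; flatness of $G$ covers the latter by germs $[\ell]\colon(z,r)\to(x,p)$ with $[f][\ell]=[g][\ell]$, and restricting $\ell$ to a domain on which $f\ell=g\ell$ holds strictly yields a genuine $h\colon w\to x$ in $\mathcal{C}$ with $fh=gh$. Since $\mathcal{C}$ has finite limits, flat then gives lex for $\widehat{G}$.

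For the first statement I would use local connectedness of $\mathcal{E}$ to write each relevant limit as a union of connected subobjects $c_i$ and assign to each the ultrafilter $tp(c_i)$ on the appropriate Boolean algebra of complemented subobjects, exactly as in the proof of Proposition \ref{FCadjunction} (here I use that $F$ preserves finite disjoint unions, so that $tp(c_i)$ is genuinely an ultrafilter). For a pair $(x,p),(y,q)$, lex-ness gives $\widetilde{F}(x,p)\times\widetilde{F}(y,q)=\bigcap_{u\in p,\,v\in q}F(u\times v)\subseteq F(x\times y)$; for a connected piece $c_i$ with type $r_i\in S^{\neg}_{\mathcal{C}}(x\times y)$ one checks $\pi_{x!}r_i=p$ and $\pi_{y!}r_i=q$ (since $u\times y=\pi_x^{-1}(u)\in r_i$ for $u\in p$, and symmetrically), so the projections form a span $(x,p)\xleftarrow{[\pi_x]}(x\times y,r_i)\xrightarrow{[\pi_y]}(y,q)$ whose $\widetilde{F}$-image is the inclusion $\widetilde{F}(x\times y,r_i)\hookrightarrow\widetilde{F}(x,p)\times\widetilde{F}(y,q)$ and contains $c_i$. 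For a parallel pair of germs $[f],[g]\colon(x,p)\to(y,q)$, represented by honest maps $f,g\colon x_0\to y$ with $x_0\in p$, the equalizer of $\widetilde{F}[f],\widetilde{F}[g]$ is $\widetilde{F}(x,p)\cap Fe$ where $e=\mathrm{eq}(f,g)\hookrightarrow x_0$; for a connected piece $c_i$ I would take $s_i=tp_e(c_i)$ on $Sub^{\neg}_{\mathcal{C}}(e)$ and verify that the equalizer mono $\iota\colon e\to x$ defines a germ $[\iota]\colon(e,s_i)\to(x,p)$ (continuity: $u\cap e\in s_i$ for $u\in p$) equalizing $[f],[g]$, whose $\widetilde{F}$-image lands in and covers the equalizer. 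The empty diagram is handled by the same recipe applied to $F(1)=1$.

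The main obstacle is precisely this translation between the two kinds of arrows. Flatness of $\widetilde{F}$ and $\widehat{G}$ demands covering cones coming from \emph{total} $\mathcal{C}$-morphisms, whereas the data naturally at hand are germs (partial maps) or, in the equalizer case for $\widetilde{F}$, the equalizer subobject $e\hookrightarrow x$, which need not be complemented. Producing the correct domain object together with an ultrafilter on its complemented subobjects, and verifying continuity ($q=f_!p$) for the resulting germ, is the step requiring care; once the right $(z,r)$ and germ are identified, the covering statements reduce to the already-established summand/component bookkeeping.
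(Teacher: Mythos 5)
Your proof is correct and follows essentially the same route as the paper's: for $\widetilde{F}$, decompose the relevant limit in $\mathcal{E}$ into connected pieces and use their types as the ultrafilters indexing the covering cones; for $\widehat{G}$, decompose products and equalizers summand-wise over $\bigsqcup_p$ and invoke flatness of $G$, converting germ-cones into honest $\mathcal{C}$-spans by restriction to a common complemented domain (the paper compresses this last step into ``is epi by the flatness of $G$,'' so your version is if anything more explicit). The only slip is in your closing paragraph: flatness of $\widetilde{F}$ demands covering cones made of $Spec^{\neg}(\mathcal{C})$-morphisms, i.e.\ germs --- not total $\mathcal{C}$-morphisms --- so the non-complementedness of $e\hookrightarrow x$ is no obstacle ($[\iota]$ is a legitimate germ, $\iota$ being defined on all of $e$, and continuity is exactly the condition $u\in p\Rightarrow u\cap e\in s_i$ you check); the germ-to-total-map translation is genuinely needed only on the $\widehat{G}$ side, precisely where your second paragraph carries it out.
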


\begin{proof}
    If $F$ is lex then $\widetilde{F}$ is flat:
    \begin{itemize}
        \item[--] the empty diagram: Since $\mathcal{E}$ is locally connected we can write $1=\bigcup a_i$ where each $a_i$ is connected. Then the maps $(\widetilde{F}(1,tp(a_i))=\bigcap _{u\in tp(a_i)}Fu\hookrightarrow 1)_i$ form a cover. ($tp(a_i)$ is an ultrafilter on $Sub^{\neg }_{\mathcal{C}}(1)$ because $F1=1$.)
        \item[--] a pair of objects: Write $\widetilde{F}(x,p)\times \widetilde{F}(y,q)=\bigcup a_i$ where each $a_i$ is connected. Then the maps $(\langle \widetilde{F}([\pi _1]),\widetilde{F}([\pi _2]) \rangle :\widetilde{F}(x\times y,tp(a_i))\hookrightarrow \widetilde{F}(x,p)\times \widetilde{F}(y,q))_i$ form a cover since $a_i\subseteq \bigcap _{w\in tp(a_i)}Fw =\widetilde{F}(x\times y,tp(a_i))$. (Again, $tp(a_i)$ is an ultrafilter on $Sub^{\neg }_{\mathcal{C}}(x\times y)$ because $F(x\times y)=Fx\times Fy$. $[\pi _1]:(x\times y,tp(a_i))\to (x,p)$ is continuous as if $u_0\in p$ then $a_i\subseteq (\bigcap _p Fu)\times (\bigcap _q Fv)\subseteq Fu_0 \times Fy =F(u_0\times y)$.)
        \item[--] a parallel pair of arrows: Similarly as before we get
\[
\adjustbox{scale=0.9}{
\begin{tikzcd}
	{\widetilde{F}(eq(f,g),tp(a_i))} \\
	\dots & {eq = \bigcup a_i} && {\widetilde{F}(x,p)} && {\widetilde{F}(y,q)}
	\arrow[dashed, from=1-1, to=2-2]
	\arrow["{\widetilde{F}([j])}", curve={height=-6pt}, from=1-1, to=2-4]
	\arrow[hook, from=2-2, to=2-4]
	\arrow["{\widetilde{F}([g])}"', shift right=2, from=2-4, to=2-6]
	\arrow["{\widetilde{F}([f])}", shift left=2, from=2-4, to=2-6]
\end{tikzcd}
}
\]
where $j:eq(f,g)\hookrightarrow u_0\hookrightarrow x$ is the inclusion of the equalizer of $f$ and $g$ assuming both are defined on $u_0\in p$. It is continuous as $a_i\subseteq eq(Ff,Fg)=F(eq(f,g))\subseteq Fu_0\subseteq Fx$, moreover $a_i\subseteq Fu$ for any $u\in p$ hence $u\in p\Rightarrow u\cap eq(f,g) \in tp(a_i)$. They jointly cover as $a_i\subseteq \widetilde{F}(eq(f,g),tp(a_i))$. 
    \end{itemize}

If $G$ is flat then $\widehat{G}$ is flat:
\begin{itemize}
    \item[--] the empty diagram: $\widehat{G}(1)=\bigsqcup _p G(1,p)\to 1$ is epic as $G$ is flat (and it is enough to consider maps of the form $G(1,p)\to 1$ as any other $G(y,q)\to 1$ factors through one of them). 
    \item[--] a pair of objects: 
    \[\widehat{G}(x\times y)\to \widehat{G}(x)\times \widehat{G}(y) = (\bigsqcup _p G(x,p))\times (\bigsqcup _q G(y,q))=\bigsqcup _{p,q} G(x,p)\times G(y,q)\]
    is epi as 
    \[\bigsqcup _{r: (\pi _1)_!r=p, (\pi _2)_!r=q} G(x\times y,r)\to G(x,p)\times G(y,q)\]
    is epi by the flatness of $G$. 
    \item[--] a parallel pair of arrows: the induced map 
    \[
    \widehat{G}(eq(f,g))\to eq(\widehat{G}(f),\widehat{G}(g))=\bigsqcup _{p: f_!p=g_!p} eq(G([f]),G([g]))
    \]
    is epi as 
    \[
    \bigsqcup _{r:\  u\in p \ \Rightarrow \ eq(f,g) \cap u\in r} G(eq(f,g),r)\to eq(G([f]),G([g]))\]
    is epi by the flatness of $G$.

\end{itemize}
\end{proof}

\begin{definition}
    We will write $E=E^{\neg}$ for the set of families
\[
\left[
\begin{tikzcd}
	{(x,p_0)} & {\dots } & {(x,p_i)} & {\dots } \\
	&& {(y,q)}
	\arrow["{[f]}"', from=1-1, to=2-3]
	\arrow["{[f]}", from=1-3, to=2-3]
\end{tikzcd}
\right]_{p_i:\ f_!p_i=q}
\]
 where $f:x\to y$ is an effective epimorphism. We write $\tau _E=\tau _{E^{\neg}}$ for the generated Grothendieck topology.   
\end{definition}

Let us give an explicit description for $\tau _E$, which will be used later, in Section 5. (This is more-or-less \cite[Proposition 41]{GARNER2020102831}.)

\begin{proposition}
\label{Ebar}
    $\tau _E$ has a basis, whose elements are given by rooted cotrees, where each branch is finite, and each vertex is either a leaf, or its predecessors form an $E$-family, or it has a single predecessor from which the map is an isomorphism:

\tikzset{every picture/.style={line width=0.75pt}} %
\[
\begin{tikzpicture}[x=0.75pt,y=0.75pt,yscale=-1,xscale=1]
\draw    (170.69,123.28) -- (140.86,135.26) ;
\draw [shift={(139,136)}, rotate = 338.14] [color={rgb, 255:red, 0; green, 0; blue, 0 }  ][line width=0.75]    (10.93,-3.29) .. controls (6.95,-1.4) and (3.31,-0.3) .. (0,0) .. controls (3.31,0.3) and (6.95,1.4) .. (10.93,3.29)   ;
\draw    (171.69,143.28) -- (143.69,143.28) ;
\draw [shift={(141.69,143.28)}, rotate = 360] [color={rgb, 255:red, 0; green, 0; blue, 0 }  ][line width=0.75]    (10.93,-3.29) .. controls (6.95,-1.4) and (3.31,-0.3) .. (0,0) .. controls (3.31,0.3) and (6.95,1.4) .. (10.93,3.29)   ;
\draw    (169.69,163.28) -- (138.51,149.11) ;
\draw [shift={(136.69,148.28)}, rotate = 24.44] [color={rgb, 255:red, 0; green, 0; blue, 0 }  ][line width=0.75]    (10.93,-3.29) .. controls (6.95,-1.4) and (3.31,-0.3) .. (0,0) .. controls (3.31,0.3) and (6.95,1.4) .. (10.93,3.29)   ;
\draw    (213.69,102.28) -- (183.86,114.26) ;
\draw [shift={(182,115)}, rotate = 338.14] [color={rgb, 255:red, 0; green, 0; blue, 0 }  ][line width=0.75]    (10.93,-3.29) .. controls (6.95,-1.4) and (3.31,-0.3) .. (0,0) .. controls (3.31,0.3) and (6.95,1.4) .. (10.93,3.29)   ;
\draw    (214.69,122.28) -- (186.69,122.28) ;
\draw [shift={(184.69,122.28)}, rotate = 360] [color={rgb, 255:red, 0; green, 0; blue, 0 }  ][line width=0.75]    (10.93,-3.29) .. controls (6.95,-1.4) and (3.31,-0.3) .. (0,0) .. controls (3.31,0.3) and (6.95,1.4) .. (10.93,3.29)   ;
\draw    (212.69,142.28) -- (181.51,128.11) ;
\draw [shift={(179.69,127.28)}, rotate = 24.44] [color={rgb, 255:red, 0; green, 0; blue, 0 }  ][line width=0.75]    (10.93,-3.29) .. controls (6.95,-1.4) and (3.31,-0.3) .. (0,0) .. controls (3.31,0.3) and (6.95,1.4) .. (10.93,3.29)   ;
\draw    (213.69,186.28) -- (182.51,172.11) ;
\draw [shift={(180.69,171.28)}, rotate = 24.44] [color={rgb, 255:red, 0; green, 0; blue, 0 }  ][line width=0.75]    (10.93,-3.29) .. controls (6.95,-1.4) and (3.31,-0.3) .. (0,0) .. controls (3.31,0.3) and (6.95,1.4) .. (10.93,3.29)   ;
\draw    (256.69,168.28) -- (226.86,180.26) ;
\draw [shift={(225,181)}, rotate = 338.14] [color={rgb, 255:red, 0; green, 0; blue, 0 }  ][line width=0.75]    (10.93,-3.29) .. controls (6.95,-1.4) and (3.31,-0.3) .. (0,0) .. controls (3.31,0.3) and (6.95,1.4) .. (10.93,3.29)   ;
\draw    (257.69,188.28) -- (229.69,188.28) ;
\draw [shift={(227.69,188.28)}, rotate = 360] [color={rgb, 255:red, 0; green, 0; blue, 0 }  ][line width=0.75]    (10.93,-3.29) .. controls (6.95,-1.4) and (3.31,-0.3) .. (0,0) .. controls (3.31,0.3) and (6.95,1.4) .. (10.93,3.29)   ;
\draw    (255.69,208.28) -- (224.51,194.11) ;
\draw [shift={(222.69,193.28)}, rotate = 24.44] [color={rgb, 255:red, 0; green, 0; blue, 0 }  ][line width=0.75]    (10.93,-3.29) .. controls (6.95,-1.4) and (3.31,-0.3) .. (0,0) .. controls (3.31,0.3) and (6.95,1.4) .. (10.93,3.29)   ;
\draw    (220,220) .. controls (237.69,234.28) and (248.69,233.28) .. (268.69,223.28) ;
\draw    (116,174) .. controls (133.69,188.28) and (144.69,187.28) .. (164.69,177.28) ;
\draw    (164,196) .. controls (181.69,210.28) and (192.69,209.28) .. (212.69,199.28) ;

\draw (235,234.4) node [anchor=north west][inner sep=0.75pt]    {$E$};
\draw (131,188.4) node [anchor=north west][inner sep=0.75pt]    {$E$};
\draw (179,210.4) node [anchor=north west][inner sep=0.75pt]    {$iso$};

\end{tikzpicture}
\]
    
\end{proposition}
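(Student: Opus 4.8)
The plan is to show that the collection $\bar E$ of cotree families satisfying the stated constraints is a \emph{basis} for $\tau_E$, in the sense that a sieve is $\tau_E$-covering exactly when it contains a member of $\bar E$. Setting $J'(c)=\{S : R\subseteq S\text{ for some cotree family }R\text{ rooted at }c\}$, it suffices to establish: (1) every cotree family is $\tau_E$-covering, so that $J'\subseteq \tau_E$; (2) $J'$ is itself a Grothendieck topology; and (3) $J'$ contains every $E$-family. Since $\tau_E$ is by definition the smallest topology containing $E$, items (1)--(3) together force $J'=\tau_E$, which is the claim. Item (3) is immediate, since a single $E$-family displayed below its root is a (height-one) cotree. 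Item (1) follows from the fact that $E$-families and isomorphisms are $\tau_E$-covering, together with the transitivity axiom of a Grothendieck topology: along its finite branches a cotree is nothing but an iterated composite of $E$-families and iso-edges, and covering families are closed under such composition.

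For item (2) I would check the topology axioms for $J'$. The maximal sieve is covered by the one-edge iso cotree. Transitivity amounts to the observation that \emph{grafting} stays inside $\bar E$: replacing each leaf of a cotree over $(y,q)$ by the root of a further cotree again yields a rooted cotree with finite branches, each vertex still being a leaf, an $E$-family, or a single iso-predecessor. The only substantial axiom is stability under pullback, and this is where essentially all the work sits.

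The crux is to pull back a single $E$-family along an arbitrary germ and to recognise the result as a sieve containing a cotree family; the general case then follows by induction on the height. So let $f\colon x\to y$ be an effective epimorphism, giving the $E$-family $\{[f]\colon (x,p_i)\to (y,q)\mid f_!p_i=q\}$, and let $[g]\colon (z,r)\to (y,q)$ be any germ, represented by a partial map $g_0\colon z\supseteq w\to y$ with $w\in r$, where continuity gives $g_{0!}(\restr{r}{w})=q$. Here the iso-edges become indispensable: because $w\in r$, the germ of the inclusion $w\hookrightarrow z$ is an isomorphism $[\iota]\colon (w,\restr{r}{w})\xrightarrow{\ \sim\ }(z,r)$ in $Spec^{\neg}(\mathcal C)$ (just as for the summand inclusions in Proposition \ref{FCadjunction}). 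Forming the pullback of $f$ along $g_0$ in $\mathcal C$ produces an effective epimorphism $f'\colon w\times_y x\to w$ (effective epis are pullback-stable in a coherent category) and a projection $\pi\colon w\times_y x\to x$ with $f\pi=g_0 f'$. Placing the $E$-family of $f'$ over $(w,\restr{r}{w})$, namely $\{[f']\colon (w\times_y x,s_j)\to (w,\restr{r}{w})\mid f'_!s_j=\restr{r}{w}\}$, on top of the iso-edge $[\iota]$ gives a height-two cotree rooted at $(z,r)$. Each composite $(w\times_y x,s_j)\to (z,r)\xrightarrow{[g]}(y,q)$ equals $[f]\circ[\pi]$ with $p_i:=\pi_!s_j$, and $f_!p_i=(f\pi)_!s_j=(g_0f')_!s_j=g_{0!}(\restr{r}{w})=q$, so it factors through the original $E$-family. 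Hence this cotree lies inside the pullback sieve, establishing $J'$-stability first for $E$-families and then, by the inductive grafting argument, for all cotrees.

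The step I expect to be the main obstacle is exactly this pullback computation: carrying out the germ bookkeeping (choosing a representative $g_0$ of $[g]$, forming $\restr{r}{w}$, and identifying the resulting iso-edge) and verifying that the pushforward identities $f_!p_i=q$ hold on the nose, so that the pulled-back $E$-family really does refine the pullback sieve. Once this is in place, closure under composition and the remaining axioms are formal.
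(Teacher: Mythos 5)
Your proposal is correct, and its substantial part --- verifying the basis axioms, above all pullback-stability --- coincides with the paper's own proof: both arguments pull back a single $E$-family along a germ $[g]$ represented by a partial map $g_0\colon z\supseteq w\to y$, exploit that restriction to the domain $w\in r$ gives an isomorphism in $Spec^{\neg}(\mathcal{C})$, form the pullback of the effective epi in $\mathcal{C}$, and check the pushforward identity $f_!\pi_!s_j=g_{0!}f'_!s_j=q$ to see that the resulting ``$E$-family postcomposed with an iso'' cotree refines the pullback sieve; general cotrees are then handled levelwise, which is your induction on height. The one place where you genuinely diverge is the identification of the basis-generated topology with $\tau_E$. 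You do it combinatorially: cotree families are iterated composites of $E$-families and isomorphisms, hence $\tau_E$-covering by the transitivity (local character) axiom, and the reverse inclusion is exactly the minimality defining $\tau_E$. The paper instead argues that any functor sending $E$-families to epimorphic families does the same to cotree families (citing the $\omega$-topos remark of \cite{presheaftype}) and then applies \cite[Proposition 1.3.3(2)]{makkai} to $\#Y$ to conclude that the cotree topology is the smallest one containing $E$. Your route is more elementary and self-contained; the paper's route brings in the sheaf-theoretic detection of covers that it reuses later (e.g.\ in Proposition \ref{strictlyeq} and Theorem \ref{weaklylc}). Nothing in your argument is missing or circular.
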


\begin{proof}
    First we show that this collection of families is indeed the basis of a topology (see \cite[Definition III.2.2 and Exercise III.3]{sheaves}).
    \begin{itemize}
        \item[i)] It contains all isomorphisms: clear.
        \item[ii)] It is "pullback"-stable. If $((x,p_i)\xrightarrow{[f]}(y,q))_i$ is an $E$-family, and $(y',q')\xrightarrow{[g]}(y,q)$ is arbitrary, then the family of arrows with codomain $(y',q')$, whose postcomposition with $[g]$ factors through a leg of the cover, contains a covering family. Indeed, assume that $g$ is a partial map $g:y'\supseteq v\to y$ with $v\in q'$ and write $f'$ for the pullback of $f$ along $g$. Then $f':x'\to v$ is effective epi, and whenever $p'\in S_{\mathcal{C}}^{\neg }(x')$ satisfies $f'_!p'=q'$, we get a square
\[\begin{tikzcd}
	{(x',p')} && {(x,g'_!p')} \\
	{(v,q')} \\
	{(y',q')} && {(y,q)}
	\arrow["{[g']}", from=1-1, to=1-3]
	\arrow["{[f']}"', from=1-1, to=2-1]
	\arrow["{[f]}", from=1-3, to=3-3]
	\arrow["{[id]}"', from=2-1, to=3-1]
	\arrow["{[g]}"{description}, from=2-1, to=3-3]
	\arrow["{[g]}"', from=3-1, to=3-3]
\end{tikzcd}\]
that is, $f$ is continuous wrt.~$g'_!p'$, as $f_!g'_!p'=g_!f'_!p'=g_!q'=q$. So the "pullback" family contains a cover of the form: an $E$-family post-composed with an isomorphism. Given a composite cotree as above, we can "pull it back" levelwise. 
        \item[iii)] Closed under building cotrees of height 2: clear.
    \end{itemize}
Finally note that any $Spec^{\neg}(\mathcal{C})\to \mathcal{E}$ functor which sends the $E$-families to epimorphic ones, does the same with the $\tau _E$-families (in the terminology of \cite{presheaftype}: every topos is an $\omega $-topos, see \cite[Remark 2.9]{presheaftype}). So if $\tau $ is a Grothendieck topology with $E\subseteq \tau \subseteq \tau _E$ then $\#Y:Spec^{\neg }(\mathcal{C})\to Sh(Spec^{\neg }(\mathcal{C}),\tau )$ turns the $\tau _E$-families epimorphic, hence $\tau _E\subseteq \tau $ by \cite[Proposition 1.3.3(2)]{makkai}. It follows that $\tau _E$ is the smallest topology containing $E$.
    
\end{proof}

\begin{proposition}
    If $F$ preserves effective epis, then $\widetilde{F}$ is $E$-preserving, assuming $\mathcal{E}$ is locally connected. If $G$ is $E$-preserving, then $\widehat{G}$ preserves effective epis, for arbitrary $\mathcal{E}$.
\end{proposition}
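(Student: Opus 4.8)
The plan is to mirror the structure of Proposition \ref{restrictstoflat} and treat the two implications separately, since they have very different flavors. The second implication is essentially a repackaging of definitions and works for arbitrary $\mathcal{E}$, so I would dispatch it first. The first implication is the substantive one: it requires local connectedness and runs parallel to the "$F$ lex $\Rightarrow \widetilde{F}$ flat" half of Proposition \ref{restrictstoflat}, using the connected-component-plus-type argument from the last paragraph of Proposition \ref{FCadjunction}.

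For the easy direction ($G$ is $E$-preserving $\Rightarrow \widehat{G}$ preserves effective epis), I would fix an effective epi $f:x\to y$ in $\mathcal{C}$ and examine $\widehat{G}(f)\colon \bigsqcup_{p}G(x,p)\to \bigsqcup_{q}G(y,q)$, which is induced by the maps $G([f])\colon G(x,p)\to G(y,f_!p)$. Because coproducts in a topos are disjoint and the injections $G(y,q)\hookrightarrow\bigsqcup_q G(y,q)$ are complemented and jointly epic, $\widehat{G}(f)$ is epic iff, for each fixed $q$, the preimage map $\bigsqcup_{p:\,f_!p=q}G(x,p)\to G(y,q)$ is epic. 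But this map is exactly the image under $G$ of the $E$-family $((x,p)\xrightarrow{[f]}(y,q))_{p:\,f_!p=q}$, which $G$ sends to a jointly epimorphic family by hypothesis. Since effective epis and epis coincide in a topos, this gives the claim.

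For the hard direction ($F$ preserves effective epis $\Rightarrow \widetilde{F}$ is $E$-preserving, with $\mathcal{E}$ locally connected), fix an $E$-family with $f\colon x\to y$ effective epi and target $(y,q)$; I must show $(\widetilde{F}([f])\colon \widetilde{F}(x,p_i)\to\widetilde{F}(y,q))_{i:\,f_!p_i=q}$ is jointly epic. First I would pull back the epimorphism $Ff\colon Fx\to Fy$ (epic since $F$ preserves effective epis) along $\widetilde{F}(y,q)=\bigcap_{v\in q}Fv\hookrightarrow Fy$. As $F$ preserves finite disjoint unions it preserves preimages of complemented monos, so this pullback is $P=\bigcap_{v\in q}F(f^{-1}(v))\subseteq Fx$, and $P\twoheadrightarrow\widetilde{F}(y,q)$ is epic (epis are stable under pullback). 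Since $f_!p_i=q$ forces $f^{-1}(v)\in p_i$ for every $v\in q$, we have $\widetilde{F}(x,p_i)=\bigcap_{u\in p_i}Fu\subseteq P$, and the composite $\widetilde{F}(x,p_i)\hookrightarrow P\twoheadrightarrow\widetilde{F}(y,q)$ recovers $\widetilde{F}([f])$. Hence it suffices to show that the $\widetilde{F}(x,p_i)$ jointly cover $P$.

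To produce that cover I would invoke local connectedness to write $P=\bigcup_k a_k$ with each $a_k$ connected. Exactly as in the last paragraph of Proposition \ref{FCadjunction}, $tp(a_k)=\{u\in Sub^{\neg}_{\mathcal{C}}(x):a_k\subseteq Fu\}$ is an ultrafilter and $a_k\subseteq\widetilde{F}(x,tp(a_k))$. The one genuine verification is that $tp(a_k)$ is an admissible index, i.e.\ $f_!tp(a_k)=q$: from $a_k\subseteq P=\bigcap_{v\in q}F(f^{-1}(v))$ one gets $q\subseteq f_!tp(a_k)$, and since $f_!$ carries an ultrafilter to an ultrafilter (it is the preimage along the Boolean homomorphism $f^{-1}$), maximality forces equality. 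Thus each $a_k$ factors through some $\widetilde{F}(x,p_i)$, so the $\widetilde{F}(x,p_i)$ cover $P$ and hence $\widetilde{F}(y,q)$. I expect this last bookkeeping step — checking that the types of the connected pieces land precisely among the legs of the $E$-family — to be the only real obstacle; the rest is a transcription of the arguments already established in Propositions \ref{FCadjunction} and \ref{restrictstoflat}.
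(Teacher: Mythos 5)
Your proof is correct. The second half coincides with the paper's argument verbatim: the paper also observes that $\widehat{G}(f)$ is (effective) epi iff for each $q$ the family $(G([f]):G(x,p_i)\to G(y,q))_{f_!p_i=q}$ is epimorphic, which is exactly the hypothesis on $G$.

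For the first half you take a route that is dual to the paper's, though built from the same ingredients. The paper decomposes the \emph{domain}: write $Fx=\bigcup_i a_i$ with $a_i$ connected, push forward to $b_i=Ff(a_i)$, use that effective epic images of connected objects are connected (from the earlier proposition characterizing local connectedness), and sort the $b_i$ by type, noting that those of type $q$ cover $\widetilde{F}(y,q)$ while the rest are disjoint from it. You instead pull the epi $Ff$ back along $\widetilde{F}(y,q)\hookrightarrow Fy$ to get $P\twoheadrightarrow\widetilde{F}(y,q)$ and decompose $P$ into connected pieces, which lets you avoid the "image of connected is connected" lemma entirely, at the price of needing the identification $P=\bigcap_{v\in q}F(f^{-1}(v))$, i.e.\ that $F$ preserves preimages of complemented monos. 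That is legitimate here since the ambient $F$ (as in Proposition \ref{FCadjunction} and Theorem \ref{main1}) preserves finite disjoint unions, but it is worth flagging that your argument leans on this standing hypothesis, not merely on preservation of effective epis. A genuine merit of your write-up is that you make explicit the step the paper leaves implicit: that $f_!$ sends ultrafilters to ultrafilters (preimage along the Boolean homomorphism $f^{-1}$), so that $q\subseteq f_!\,tp(a_k)$ upgrades to equality by maximality, ensuring the types of the connected pieces really occur among the legs $p_i$ of the $E$-family. The paper's version needs the same fact to know that $[f]:(x,tp(a_i))\to(y,q)$ is a morphism of $Spec^{\neg}(\mathcal{C})$ at all, but does not spell it out.
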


\begin{proof}
    First claim: Let $f:x\to y$ be an effective epi in $\mathcal{C}$. Then $Ff:Fx\to Fy$ is effective epi. Since $\mathcal{E}$ is locally connected we can write $Fx=\bigcup a_i$ where each $a_i$ is connected. Let $b_i$ be the image of $a_i$ under $Ff$. Then $b_i$ is connected and they together cover $Fy$. Since each $b_i$ has a type it follows that $\widetilde{F}(y,q)= \bigcap _{v\in q} Fv$ is covered by those $b_i$'s whose type is $q$ (the rest is disjoint). Since any such $b_i$ is covered by $a_i\subseteq \bigcap _{u\in tp(a_i)}Fu=\widetilde{F}(x,tp(a_i))\xrightarrow{\widetilde{F}([f])=\restr{Ff}{\bullet }}\widetilde{F}(y,q)$ we are done.

     Second claim: We need that $\bigsqcup _{p\in S_{\mathcal{C}}^{\neg }(x)}G(x,p)\to \bigsqcup _{q\in S_{\mathcal{C}}^{\neg }(y)}G(y,q)$ is effective epi, i.e.~that for any $q$ the family $(G([f]):G(x,p_i)\to G(y,q))_{f_{!}p_i =q} $ is epimorphic. This is precisely our assumption.
\end{proof}

\begin{theorem}
\label{main1}
    Let $\mathcal{C}$ be either a coherent category with finite disjoint coproducts or a Boolean coherent category. Write $\varphi =\varphi ^{\neg}:\mathcal{C}\to Sh(Spec^{\neg }(\mathcal{C}),\tau _E)$ for the functor taking $y$ to $Spec^{\neg }(\mathcal{C})(-,y)^{\#}$ (sheafification of the presheaf which sends $(x,p)$ to the set of $p$-germs of $x\to y$ maps). Then:
    \begin{itemize}
        \item[i)] $\varphi $ is coherent.
        \item[ii)] Given any Grothendieck topos $\mathcal{E}$, $ \varphi ^{\circ}:Fun^*(Sh(Spec^{\neg }(\mathcal{C})),\mathcal{E})\to \mathbf{Coh}(\mathcal{C},\mathcal{E})$ is fully faithful.
        \item[iii)] When $\mathcal{E}$ is locally connected $ \varphi ^{\circ}$ is an equivalence.
    \end{itemize}
\end{theorem}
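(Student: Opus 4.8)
The plan is to reduce everything to the adjunction $\widehat{(-)}\dashv\widetilde{(-)}$ of Proposition~\ref{FCadjunction}, via Diaconescu's theorem, after first recognizing $\varphi$ as $\widehat{(-)}$ applied to the sheafified Yoneda embedding. Write $\mathcal{F}=Sh(Spec^{\neg}(\mathcal{C}),\tau_E)$ and let $\ell\colon Spec^{\neg}(\mathcal{C})\to\mathcal{F}$ be the sheafification of the Yoneda embedding, $\ell(x,p)=Spec^{\neg}(\mathcal{C})(-,(x,p))^{\#}$. First I would observe that the presheaf of $p$-germs sending $(x,p)$ to $\{p\text{-germs }x\to y\}$ splits as the coproduct $\bigsqcup_{q\in S^{\neg}_{\mathcal{C}}(y)}Spec^{\neg}(\mathcal{C})(-,(y,q))$, since continuity forces a $p$-germ $f$ to land in the single object $(y,f_!p)$ (Definition~\ref{specdef}). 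As sheafification preserves coproducts, this gives a natural isomorphism $\varphi\cong\widehat{\ell}$.

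For (i), I would use that $\ell$ is flat and $\tau_E$-continuous (the universal property of the sheafified Yoneda embedding). Then Proposition~\ref{restrictstoflat} shows $\widehat{\ell}=\varphi$ is flat, hence lex; the final proposition on $E$-preservation, applied to $G=\ell$ (which is $E$-preserving because it sends covers to epimorphic families), shows $\varphi$ preserves effective epimorphisms; and the computation in the proof of Proposition~\ref{FCadjunction} shows $\varphi$ preserves $\emptyset$ and finite disjoint coproducts. Under either hypothesis on $\mathcal{C}$, finite unions of subobjects are assembled from finite limits, effective epis and disjoint coproducts (as the image of $u\sqcup v\to x$, respectively via complements $(u^c\cap v^c)^c$), so these three preservation properties already force $\varphi$ to be coherent.

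For (ii) and (iii) I would invoke Diaconescu to identify $Fun^*(\mathcal{F},\mathcal{E})$ with the category $\mathrm{Flat}_{\tau_E}(Spec^{\neg}(\mathcal{C}),\mathcal{E})$ of flat, $\tau_E$-continuous functors, the equivalence being restriction $\Phi\mapsto\Phi\circ\ell$ along $\ell$. Since an inverse-image functor $\Phi$ preserves coproducts, $\varphi^{\circ}(\Phi)=\Phi\circ\varphi\cong\Phi\circ\widehat{\ell}\cong\widehat{\Phi\circ\ell}$, so under this identification $\varphi^{\circ}$ is exactly the restriction of $\widehat{(-)}$ to $\mathrm{Flat}_{\tau_E}$. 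By Proposition~\ref{restrictstoflat} and the $E$-preservation proposition, $\widehat{G}$ is coherent whenever $G$ is flat and $\tau_E$-continuous (by the same reduction as in (i)), so this restriction lands in $\mathbf{Coh}(\mathcal{C},\mathcal{E})$. Full faithfulness (ii) is then immediate: since $\widehat{(-)}\dashv\widetilde{(-)}$ with $\widetilde{(-)}\circ\widehat{(-)}\cong\mathrm{id}$ by Proposition~\ref{FCadjunction}, the unit is invertible, so $\widehat{(-)}$ is fully faithful on the whole functor category, a fortiori on the full subcategory $\mathrm{Flat}_{\tau_E}$.

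For (iii), with $\mathcal{E}$ locally connected it remains to prove essential surjectivity. Given a coherent $F\colon\mathcal{C}\to\mathcal{E}$, I would take $G=\widetilde{F}$: it is flat by Proposition~\ref{restrictstoflat} (using that $F$ is lex and $\mathcal{E}$ is locally connected) and $E$-preserving, hence $\tau_E$-continuous, by the $E$-preservation proposition (using that $F$ preserves effective epis); and Proposition~\ref{FCadjunction} gives $\widehat{\widetilde{F}}\cong F$ precisely in the locally connected case. Thus $\varphi^{\circ}$ is essentially surjective, and being fully faithful, an equivalence. I expect the main obstacle to be bookkeeping rather than depth: matching the $E$-preservation condition with genuine $\tau_E$-continuity (licensed by Proposition~\ref{Ebar} together with the fact that every topos is an $\omega$-topos, so that turning $E$-families epimorphic already turns all $\tau_E$-families epimorphic), and verifying that the three structural preservation properties truly capture coherence under each of the two hypotheses on $\mathcal{C}$.
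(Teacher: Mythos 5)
Your proposal is correct and follows essentially the same route as the paper: identify $\varphi$ with $\widehat{(-)}$ applied to the sheafified Yoneda embedding (using that $Spec^{\neg}(\mathcal{C})(-,y)$ splits as $\bigsqcup_q Y(y,q)$ since ultrafilter continuity forces $q=f_!p$), invoke the equivalence $(\#Y)^{\circ}\colon Fun^*(Sh(Spec^{\neg}(\mathcal{C})),\mathcal{E})\simeq E\text{-}Flat(Spec^{\neg}(\mathcal{C}),\mathcal{E})$, and conclude via the adjunction and transfer propositions (\ref{FCadjunction}, \ref{restrictstoflat} and the $E$-preservation statement). The only cosmetic difference is order: you establish $\varphi\cong\widehat{\#Y}$ up front and prove (i) directly, whereas the paper performs this identification at the end and obtains (i) as a byproduct --- a variation the paper itself acknowledges as equally valid.
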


\begin{proof}
    By the previous propositions, given any Grothendieck topos $\mathcal{E}$ we have 
    \[
    E-Flat(Spec^{\neg}(\mathcal{C}),\mathcal{E})\xrightarrow{\widehat{(-)}}\mathbf{Coh}(\mathcal{C},\mathcal{E})\xrightarrow{\widetilde{(-)}}[Spec^{\neg}(\mathcal{C}),\mathcal{E}]
    \]
    whose composite is naturally isomorphic to the full embedding (and the second map lands in $E-Flat(Spec^{\neg}(\mathcal{C}),\mathcal{E})$ and provides a quasi-inverse, assuming $\mathcal{E}$ is locally connected). Here we need the assumption on $\mathcal{C}$ to ensure that finite disjoint union preserving regular functors are coherent.

    The map $(\#Y)^{\circ} :Fun^*(Sh(Spec^{\neg}(\mathcal{C})),\mathcal{E})\to E-Flat(Spec^{\neg}(\mathcal{C}),\mathcal{E})$ is an equivalence whose quasi-inverse is $Lan_{\#Y}$ (this is well-known, see e.g.~\cite[Theorem 3.15]{presheaftype} for a proof). So the composite 
    \[
    Fun^*(Sh(Spec^{\neg}(\mathcal{C})),\mathcal{E})\xrightarrow{(\#Y)^{\circ}} E-Flat(Spec^{\neg}(\mathcal{C}),\mathcal{E}) \xrightarrow{\widehat{(-)}}\mathbf{Coh}(\mathcal{C},\mathcal{E})
    \]
    is fully faithful, and when $\mathcal{E}$ is locally connected it is an equivalence. It remains to identify this map with $\varphi ^{\circ }$ (then in particular it follows that $\varphi $, as the image of $1_{Sh(Spec^{\neg }(\mathcal{C}))}$, is coherent; although this can be checked directly as well).

    So let $M^*:Sh(Spec^{\neg }(\mathcal{C}))\to \mathcal{E}$ be lex cocontinuous and $f:x\to y$ be a morphism in $\mathcal{C}$. We have
\[
\adjustbox{width=\textwidth}{
\begin{tikzcd}
	{\bigsqcup _{p\in S_{\mathcal{C}}^{\neg}(x)}(M^*\circ \# \circ Y)(x,p)} && {M^*\circ \#\ (\bigsqcup _{p} Y(x,p))} & {M^*\circ \# \ (Spec^{\neg }(\mathcal{C})(-,x))} \\
	\\
	{\bigsqcup _{q\in S_{\mathcal{C}}^{\neg}(y)}(M^*\circ \# \circ Y)(y,q)} && {M^*\circ \# \ (\bigsqcup _{q} Y(y,q))} & {M^*\circ \# \ (Spec^{\neg }(\mathcal{C})(-,y))}
	\arrow["{\cong }", from=1-1, to=1-3]
	\arrow["{\left(M^*\circ \# \circ Y \ ([f]) \right)_p}", from=1-1, to=3-1]
	\arrow[Rightarrow, no head, from=1-3, to=1-4]
	\arrow[from=1-3, to=3-3]
	\arrow["{M^*\circ \# \ (f_{\circ })}", from=1-4, to=3-4]
	\arrow["{\cong }", from=3-1, to=3-3]
	\arrow[Rightarrow, no head, from=3-3, to=3-4]
\end{tikzcd}
}
\]
This isomorphism is natural in $M^*$ which completes the proof.

\end{proof}

\begin{remark}
\label{varphigeomsurj}
    The induced map $\varphi ^*: Sh(\mathcal{C})\leftrightarrow Sh(Spec^{\neg }(\mathcal{C})):\varphi _*$ is a geometric surjection. Indeed, since $Sh(\mathcal{C})$ has enough points and $\mathbf{Set}^I$ is locally connected we get an extension
\[\begin{tikzcd}
	{Sh(\mathcal{C})} && {\mathbf{Set}^I} \\
	{Sh(Spec^{\neg }(\mathcal{C}))}
	\arrow["{\langle M^*_i\rangle _i}", from=1-1, to=1-3]
	\arrow["{\varphi ^*}"', from=1-1, to=2-1]
	\arrow[dashed, from=2-1, to=1-3]
\end{tikzcd}\]
showing that $\varphi ^*$ is conservative.
\end{remark}

\section{The prime filter case}

There will be two essential differences, compared to the previous section.

First, unlike in the ultrafilter case, $\widetilde{(-)}:\mathbf{WCoh}(\mathcal{C},\mathbf{Set})\to [Spec(\mathcal{C}),\mathbf{Set}]$ defined by $\widetilde{F}(x,p)=\{a\in Fx: tp(a)\geq p \}=\bigcap _{u\in p}Fu$ is not essentially surjective. Morally, this is because the fibers are now posets, and while $\widetilde{F}$ satisfies some fiberwise $\infty $-flatness (e.g.~given $p\leq q$, the map $\widetilde{F}(x,q)\to \widetilde{F}(x,p)$ is mono, and given a $\lambda $-indexed chain $p_0\leq p_1\leq \dots $ with union $p_{\lambda }$, $\widetilde{F}(x,p_0)\hookleftarrow \widetilde{F}(x,p_1) \hookleftarrow \dots \widetilde{F}(x,p_{\lambda })$ is an intersection), arbitrary functors need not to.

Second, once we restrict $\widetilde{(-)}$ between some suitable subcategories, its proposed quasi-inverse $\widehat{(-)}$ sends $G:Spec(\mathcal{C})\to \mathbf{Set}$ to $\widehat{G}$, defined by $x\mapsto colim_{p\in S_{\mathcal{C}}^{op}} G(x,p)$. Working with (non-filtered) colimits (of monos) is harder than dealing with disjoint coproducts, so our proofs will differ from the ones in the previous section.

\begin{proposition}
   $\mathcal{C}$ is coherent. Let us write $\varphi _0:\mathcal{C}\to \mathbf{Set}^{Spec(\mathcal{C})^{op}}$ for the functor which sends $y$ to the presheaf $Spec(\mathcal{C})(-,y)$ (taking $(x,p)$ to the set of $p$-germs of $x\to y$ maps). Then $\varphi _0$ is finite union preserving and lex.
\end{proposition}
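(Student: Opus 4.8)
The plan is to exploit that finite limits and finite unions (the latter being images of coproducts) in $\mathbf{Set}^{Spec(\mathcal{C})^{op}}$ are computed objectwise, so it suffices to fix an object $(x,p)$ of $Spec(\mathcal{C})$ and analyse the composite $G_{(x,p)} = \mathrm{ev}_{(x,p)}\circ\varphi_0\colon \mathcal{C}\to\mathbf{Set}$, which sends $y$ to the set of $p$-germs of partial maps $x\to y$. First I would rewrite this functor as a filtered colimit of representables: a $p$-germ is represented by a pair $(u,f)$ with $u\in p$ and $f\colon u\to y$ a morphism of $\mathcal{C}$, two representatives being identified once they agree after restriction to a common smaller member of $p$. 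Since $p$ is (proper and) closed under finite intersections, the diagram $u\mapsto\mathcal{C}(u,y)$ with transition maps given by restriction is filtered, and so
\[ G_{(x,p)}(y)\;\cong\;\operatorname*{colim}_{u\in p}\,\mathcal{C}(u,y). \]

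For lex-ness I would argue as follows. Each representable $\mathcal{C}(u,-)\colon\mathcal{C}\to\mathbf{Set}$ preserves all limits present in $\mathcal{C}$, in particular the finite ones, which exist because $\mathcal{C}$ is coherent. As filtered colimits commute with finite limits in $\mathbf{Set}$, the filtered colimit $G_{(x,p)}$ of these lex functors is again lex; concretely, one evaluates a finite-limit cone of $\mathcal{C}$ and swaps the colimit past the limit. This holds at every $(x,p)$, and limits in presheaves are objectwise, so $\varphi_0$ is lex. In particular $\varphi_0$ preserves monomorphisms, so for any subobject $a\hookrightarrow y$ the map $\varphi_0(a)\hookrightarrow\varphi_0(y)$ is again a subobject, identified objectwise with the set of germs $x\to y$ that factor, as germs, through $a$.

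For finite unions I would again reason objectwise, where the union of subobjects is the union of subsets. The empty case $\varphi_0(\emptyset)=\emptyset$ holds because a partial map $x\supseteq u\to\emptyset$ with $u\in p$ would force $u\cong 0$ by strictness of the initial object, contradicting properness of the prime filter $p$; hence there are no such germs. For a binary union $a\cup b\hookrightarrow y$ I would observe that a germ represented by $f\colon u\to y$ factors through $a$ exactly when $f^{-1}(a)\in p$ (restrict $f$ to $u\cap f^{-1}(a)$), and likewise for $b$ and for $a\cup b$. Since preimage along $f$ preserves finite unions in a coherent category, $f^{-1}(a\cup b)=f^{-1}(a)\cup f^{-1}(b)$, and the defining property of a \emph{prime} filter gives
\[ f^{-1}(a)\cup f^{-1}(b)\in p \iff f^{-1}(a)\in p \ \text{ or }\ f^{-1}(b)\in p. \]
Thus a germ factors through $a\cup b$ iff it factors through $a$ or through $b$, i.e.\ $\varphi_0(a\cup b)=\varphi_0(a)\cup\varphi_0(b)$ objectwise, as desired.

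The routine bookkeeping (well-definedness of germs and of the restriction transition maps, functoriality of $\varphi_0$) is straightforward. The genuine content, and the step I expect to carry the weight, is union-preservation: it is precisely the primeness of $p$ — rather than mere filteredness — that converts $f^{-1}(a)\cup f^{-1}(b)\in p$ into a disjunction, so this is the one place where the prime-filter hypothesis defining $Spec(\mathcal{C})$ is essential (mirroring how, in the Boolean setting, $tp$ being an ultrafilter forced the analogous decomposition). The lex part, by contrast, is the standard fact that a filtered colimit of representables is flat, hence lex over a category with finite limits.
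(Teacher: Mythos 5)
Your proof is correct, and it splits cleanly into a part that matches the paper and a part that genuinely differs. On finite unions you and the paper argue identically: the empty case is properness of the prime filter together with strictness of the initial object, and the binary case rests on translating ``the germ $[f]$ factors through $a$'' into ``$f^{-1}(a)\in p$'', using that preimage preserves unions and then invoking primeness of $p$ to split $f^{-1}(a)\vee f^{-1}(b)\in p$ into a disjunction (the paper phrases it with $y=v_1\vee v_2$ and $u_0=h^{-1}(v_1)\vee h^{-1}(v_2)$, which is the same computation). Where you diverge is lexness: the paper verifies the terminal object, binary products and equalizers directly on germs --- the $p$-germs to $1$ form a singleton, pairing $[\langle f_1,f_2\rangle]\mapsto([f_1],[f_2])$ gives the product comparison, and an equalized pair of germs already agrees on some $u_1\in p$, hence factors through the equalizer --- whereas you first identify the evaluation of $\varphi_0$ at $(x,p)$ with the colimit $\operatorname{colim}_{u\in p}\mathcal{C}(u,y)$ over the directed poset given by $p$ under reverse inclusion (directed precisely because a filter is nonempty and closed under finite meets), and then cite the standard facts that representables are lex and that filtered colimits commute with finite limits in $\mathbf{Set}$. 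Your route is more modular: it compresses the three case checks into one well-known principle, makes the well-definedness of the germ relation transparent (it is exactly the colimit identification), and in fact exhibits each evaluation as a filtered colimit of representables, i.e.\ as a flat functor, which is slightly more than lexness. The paper's hands-on verification buys self-containedness and keeps the explicit germ-level descriptions of products and equalizers visible, which is the concrete form in which such manipulations reappear in the later arguments of that section. Both arguments are sound; your observation that primeness of $p$ is the one irreplaceable ingredient (needed only for unions, not for limits) is exactly the right diagnosis.
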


\begin{proof}
    Terminal object: for any $(x,p)$, the set of $p$-germs of $x\to 1$ maps is a singleton.

    Binary products: $Spec(\mathcal{C})((x,p),y_1\times y_2)\to Spec(\mathcal{C})((x,p),y_1)\times Spec(\mathcal{C})((x,p),y_2)$ sending $[\langle f_1,f_2\rangle :x\supseteq u\to y_1\times y_2]$ to $([f_1],[f_2])$ is (the $(x,p)$-component of) a natural isomorphism.

    Equalizers: if $[x\supseteq u_0\xrightarrow{h}y\xrightarrow{f}z]=[x\supseteq u_0\xrightarrow{h}y\xrightarrow{g}z]$ then there's $u_0\supseteq u_1\in p$ with $f\circ \restr{h}{u_1}=g\circ \restr{h}{u_1}$, hence $\restr{h}{u_1}$ factors through the equalizer, which means $[h]\in Spec(\mathcal{C})((x,p),eq(f,g))$.

    Finite unions: $\emptyset $ is preserved, as if $(x,p)\in Spec(\mathcal{C})$ then $x\neq \emptyset $, so there are no $x\to \emptyset $ arrows. If $y=v_1\vee v_2$ then given a germ $[h:x\supseteq u_0\to y]$, we get $u_0=h^{-1}(v_1)\vee h^{-1}(v_2)$, so one of them, say $h^{-1}(v_1)$ is in $p$, and therefore $[h]\in Spec(\mathcal{C})((x,p),v_1)\subseteq Spec(\mathcal{C})((x,p),y)$.

\end{proof}

\begin{definition}
    A Grothendieck topos is prime-generated if every object is the union of some of its $\bigvee $-indecomposable subobjects. We will also refer to such (sub)objects as primes.
\end{definition}

\begin{definition}
    A functor $G:Spec(\mathcal{C})\to \mathcal{E}$ is $p_{\infty }$-flat if it is flat and given a $\lambda $-indexed chain $(p_0\leq p_1\leq \dots p_{\alpha }\leq \dots )_{\alpha <\lambda }$ of prime filters on $Sub_{\mathcal{C}}(x)$, with union $p_{\lambda }$, it follows that $G(x,p_0)\hookleftarrow G(x,p_1)\hookleftarrow \dots G(x,p_{\alpha })\hookleftarrow \dots G(x,p_{\lambda })$ is a limit (intersection) in $\mathcal{E}$.

    (Note that $G(x,p_1)\to G(x,p_0)$ is mono, as $[id]:(x,p_1)\to (x,p_0)$ is mono and flat functors preserve monomorphisms.)
\end{definition}

\begin{proposition}
\label{lexvequiv}
    $\mathcal{C}$ is coherent, $\mathcal{E}$ is a prime-generated Grothendieck topos. Then there is an equivalence of categories
\[\begin{tikzcd}
	{Flat_{p_{\infty}}(Spec(\mathcal{C}),\mathcal{E})} && {\mathbf{Lex}_{\vee}(\mathcal{C},\mathcal{E})}
	\arrow["{\widehat{(-)}}", curve={height=-12pt}, from=1-1, to=1-3]
	\arrow["{\widetilde{(-)}}", curve={height=-12pt}, from=1-3, to=1-1]
\end{tikzcd}\]
with $\widetilde{F}(x,p)= \bigcap _{u\in p}Fu$ and $\widehat{G}(x)=colim _{p\in S_{\mathcal{C}}(x)^{op}}G(x,p)$.   
\end{proposition}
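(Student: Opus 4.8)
The plan is to run the ultrafilter argument of Propositions \ref{FCadjunction}--\ref{restrictstoflat} again, replacing ``locally connected'' by ``prime-generated'', the disjoint coproduct $\bigsqcup_p$ by the colimit $colim_{p\in S_{\mathcal{C}}(x)^{op}}$, ultrafilters by prime filters, and carrying the extra $p_{\infty}$-condition along. As in Proposition \ref{FCadjunction} there is a purely formal adjunction $\widehat{(-)}\dashv\widetilde{(-)}$ between $[Spec(\mathcal{C}),\mathcal{E}]$ and $[\mathcal{C},\mathcal{E}]$: a map $\widehat{G}\Rightarrow F$ is a cocone $(G(x,p)\to Fx)_p$ natural in $x$, and restricting to the leg at $(x,p)$ one uses that for every $u\in p$ the germ $[i]:(u,p|_u)\to(x,p)$ is an isomorphism (its inverse is the germ of the partial identity, defined on $u\in p$, and $i_!p|_u=p$ since $u\in p$), so that leg factors through every $Fu$ with $u\in p$, hence through $\widetilde{F}(x,p)=\bigcap_{u\in p}Fu$; this yields the natural bijection with $\operatorname{Hom}(G,\widetilde{F})$. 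The unit and counit of this adjunction are the comparison maps whose invertibility I must check.

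First I would verify that the two functors restrict to the stated subcategories. For $\widetilde{(-)}$, flatness of $\widetilde{F}$ is proved exactly as in the first half of Proposition \ref{restrictstoflat}: cover a finite limit in $\mathcal{E}$ by its prime subobjects $a_i$, observe that $tp(a_i)=\{u:a_i\subseteq Fu\}$ is a \emph{prime} filter (here $a_i$ is $\vee$-indecomposable and $F\in\mathbf{Lex}_{\vee}$ preserves $\emptyset$, $\wedge$ via pullbacks, and $\vee$), and use the induced cones $\widetilde{F}(\dots,tp(a_i))\hookrightarrow \lim$. The $p_{\infty}$-condition is then immediate from the formula: if $p_{\lambda}=\bigcup_{\alpha}p_{\alpha}$ then $\widetilde{F}(x,p_{\lambda})=\bigcap_{u\in p_{\lambda}}Fu=\bigcap_{\alpha}\bigcap_{u\in p_{\alpha}}Fu=\bigcap_{\alpha}\widetilde{F}(x,p_{\alpha})$, which is exactly the required intersection. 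For $\widehat{(-)}$, since a coherent $\mathcal{C}$ has finite limits I only need $\widehat{G}$ to be flat (it is then lex) and $\vee$-preserving; both follow as in the second half of Proposition \ref{restrictstoflat}, distributing $\times$ over the colimits (legitimate in a topos, as $-\times A$ is cocontinuous) and composing the cover $(G(x,p)\times G(y,q)\to\widehat{G}(x)\times\widehat{G}(y))_{p,q}$ with the covers $(G(x\times y,r)\to G(x,p)\times G(y,q))_{(\pi_1)_!r=p,\,(\pi_2)_!r=q}$ coming from flatness of $G$; the empty and parallel-pair cases are the analogues of those listed there, and $\widehat{G}(\emptyset)=\emptyset$ because $Sub_{\mathcal{C}}(\emptyset)$ carries no prime filter.

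It remains to see that the unit and counit are isomorphisms. The counit $\widehat{\widetilde{F}}(x)=colim_p\bigcap_{u\in p}Fu\to Fx$ is epic because the subobjects $\widetilde{F}(x,p)$ cover $Fx$: each prime $a\hookrightarrow Fx$ lies in $\widetilde{F}(x,tp(a))$ by prime-generation. For mono-ness I would reduce to primes: every prime of the colimit factors through some leg $\widetilde{F}(x,p)$, and two such primes with equal image in $Fx$ are a single prime $w\hookrightarrow Fx$ whose type $tp(w)$ is a genuine prime filter refining both indices ($p\subseteq tp(w)$, $q\subseteq tp(w)$, $w\subseteq\widetilde{F}(x,tp(w))$), so the two legs already agree in the colimit; hence the comparison is injective on the primes that cover it and therefore mono. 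The unit $G(x,p)\to\widetilde{\widehat{G}}(x,p)=\bigcap_{u\in p}\widehat{G}(u)$ is where the $p_{\infty}$-condition is indispensable: via the isomorphisms $(x,p)\cong(u,p|_u)$ and flatness of $G$ each $\widehat{G}(u)$ restricts over $p$ to $G(x,p)$, and I must show the cofiltered intersection $\bigcap_{u\in p}$ (the $u\in p$ form a downward-directed set, since $p$ is $\wedge$-closed) loses no information, which is precisely the chain-completeness packaged in $p_{\infty}$-flatness.

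I expect this last point---invertibility of the unit, i.e.\ reconstructing the fiber $G(x,p)$ from $\bigcap_{u\in p}\widehat{G}(u)$---to be the main obstacle, together with the mono-ness half of the counit. Both are manifestations of the same difficulty: the index poset $S_{\mathcal{C}}(x)^{op}$ is \emph{not} filtered (for Boolean $\mathcal{C}$ distinct ultrafilters admit no common refinement), so finite limits and $colim_p$ do not commute for free and the colimit can a priori over-identify. Prime-generation controls the over-identification (primes carry honest prime-filter types), while $p_{\infty}$-flatness supplies the missing chain-completeness that makes the intersection over a prime filter behave like a limit; the routine flatness bookkeeping is exactly as before and is not where the content lies.
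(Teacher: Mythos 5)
Your reduction plan has genuine gaps at exactly the places where the prime-filter case differs from the ultrafilter one, and the key lemma the paper uses to bridge them is absent from your proposal: the natural isomorphism $\widehat{G}\cong Lan_YG\circ\varphi_0$, i.e.\ the factorization of $\widehat{(-)}$ through the presheaf topos $\mathbf{Set}^{Spec(\mathcal{C})^{op}}$. Your claim that flatness of $\widehat{G}$ ``follows as in the second half of Proposition~\ref{restrictstoflat}'' fails for the parallel-pair case: that argument rests on coproducts being disjoint and universal, so that the equalizer decomposes summand-wise as $eq(\widehat{G}(f),\widehat{G}(g))=\bigsqcup_{p:\,f_!p=g_!p}eq(G([f]),G([g]))$. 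Binary products do distribute over arbitrary colimits (as you note, $-\times A$ is cocontinuous), but equalizers do \emph{not} commute with colimits over the non-filtered poset $S_{\mathcal{C}}(x)^{op}$: elements sitting in different legs can become identified in the colimit. This is precisely the ``over-identification'' you concede in your final paragraph and then dismiss as routine bookkeeping --- it is not; it is why the direct distribution argument breaks. The paper avoids it entirely: since $G$ is flat, $Lan_YG$ is lex cocontinuous, and $\varphi_0$ is lex and finite-union preserving by the preceding proposition, so $\widehat{G}\cong Lan_YG\circ\varphi_0$ is lex and $\vee$-preserving with no distribution argument at all.

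The two points you explicitly defer (``I expect this last point to be the main obstacle'') are the actual content of the proof, and your sketches do not close them. For the unit, $p_{\infty}$-chain-completeness alone is insufficient: the paper fixes a prime $a\subseteq\bigcap_{u\in p_0}\widehat{G}(u)$, forms the poset $P=\{p : a\subseteq G(x,p)\}$ (which already requires knowing the cocone legs $G(x,p)\to\widehat{G}(x)$ are mono --- again proved via $Lan_YG$, as the image of a pointwise mono of presheaves), obtains a maximal element of $P$ by Zorn's lemma (this is where $p_{\infty}$-flatness enters), and then must show $P$ is \emph{directed} so that the maximal element is a top element; directedness is proved by computing in presheaves that $Spec(\mathcal{C})(-,(x,p))\cap Spec(\mathcal{C})(-,(x,p'))$ is covered by the subobjects $Spec(\mathcal{C})(-,(x,p''))$ with $p''\geq p,p'$, and transporting along the lex cocontinuous $Lan_YG$. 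Your sketch contains no analogue of this directedness step, which is the heart of the argument. For the counit, ``injective on the primes that cover it and therefore mono'' is not a valid criterion for monicity in a topos (one must handle arbitrary generalized elements from primes, not just prime subobjects factoring through legs); the paper instead uses effectivity of unions: $\bigcup_p\widetilde{F}(x,p)$ is the colimit of the diagram formed by the $\widetilde{F}(x,p_i)$ and their pairwise intersections $R_{i,j}$, and each $R_{i,j}=\bigcap_{u\in p_i\cup p_j}Fu$ is covered by the subobjects $\widetilde{F}(x,\overline{p})$ with $\overline{p}\geq p_i,p_j$ (take $\overline{p}=tp(a)$ for primes $a$), so the poset colimit coincides with the union and the comparison map is mono.
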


\begin{proof}
    $\widetilde{F}$ is a functor: same as in the proof of Proposition \ref{FCadjunction}.

    $\widetilde{F}$ is flat: same as in the proof of Proposition \ref{restrictstoflat}, except that the $a_i$'s are now primes, i.e.~$\bigvee $-indecomposable subobjects (for this argument $\vee $-indecomposable would be enough). $\widetilde{F}$ is $p_{\infty }$-flat: we need $\bigcap _{i<\lambda } \bigcap _{u\in p_i} Fu = \bigcap _{u\in p_{\lambda }} Fu$ which is clear.
    
    $\widetilde{(-)}$ is a functor: same as in the proof of Proposition \ref{FCadjunction}.
    
    $\widehat{G}$ is a functor, and it is naturally isomorphic to $ \mathcal{C}\xrightarrow{\varphi _0} \mathbf{Set}^{Spec(\mathcal{C})^{op}}\xrightarrow{Lan_YG}\mathcal{E}$. Indeed, since $Lan_YG$ preserves all colimits, we have 
\[
\adjustbox{width=\textwidth}{
\begin{tikzcd}
	{Lan_YG(Spec(C)(-,x))} & {Lan_YG(colim _{p\in S_{\mathcal{C}}(x)^{op}}Spec(C)(-,(x,p)))} & {colim _{p\in S_{\mathcal{C}}(x)^{op}} G(x,p)} \\
	{Lan_YG(Spec(C)(-,y))} & {Lan_YG(colim _{q\in S_{\mathcal{C}}(y)^{op}}Spec(C)(-,(y,q)))} & {colim _{q\in S_{\mathcal{C}}(y)^{op}} G(y,q)}
	\arrow["\cong"{description}, draw=none, from=1-1, to=1-2]
	\arrow["{Lan_YG(f_{\circ })}"', from=1-1, to=2-1]
	\arrow["\cong"{description}, draw=none, from=1-2, to=1-3]
	\arrow["{Lan_YG(([f]_{\circ })_{(x,p)})}", from=1-2, to=2-2]
	\arrow["{(G([f]):G(x,p)\to G(y,f_!p))_{(x,p)}}", from=1-3, to=2-3]
	\arrow["\cong"{description}, draw=none, from=2-1, to=2-2]
	\arrow["\cong"{description}, draw=none, from=2-2, to=2-3]
\end{tikzcd}
}
\]

    $\widehat{G}$ preserves finite limits and finite unions: it is enough to check that this holds for $Lan_YG$, but this functor is lex cocontinuous.

    $\widehat{(-)}$ is a functor: Given $\alpha :G\to G'$, the family $(\alpha _{(x,p)}: G(x,p)\to G'(x,p))$ induces a map between the colimits.

    $\widetilde{(-)}\circ \widehat{(-)}\cong id$: First note that the cocone map $G(x,p_0)\to colim _{p\in S_{\mathcal{C}}(x)^{op}}G(x,p)$ is mono. Indeed, it is the $Lan_YG$-image of $Spec(\mathcal{C})(-,(x,p_0))\to Spec(\mathcal{C})(-,x)$, which is (pointwise) mono. 
    Hence the induced map in
\[\begin{tikzcd}
	{\bigcap _{u\in p_0} \widehat{G}(u)= \bigcap _{u\in p_0} colim _{p\in [u]^{op}}G(x,p)} && {\widehat{G}(x)=colim _{p\in S_{\mathcal{C}}(x)^{op}}G(x,p)} \\
	\\
	&& {G(x,p_0)}
	\arrow[hook, from=1-1, to=1-3]
	\arrow[dashed, from=3-3, to=1-1]
	\arrow[hook, from=3-3, to=1-3]
\end{tikzcd}\]
    is mono. We need to prove that it is an epi. 
    
    Since $\bigcap _{p_0} \widehat{ G}(u)$ can be written as a union of primes, it is enough to prove that if a prime subobject $a\hookrightarrow \widehat{G}(x)$ lies in $\bigcap _{p_0}\widehat{G}(u)$ then it lies in $G(x,p_0)$. Let $P\subseteq S_{\mathcal{C}}(x)$ be the poset of those prime filters $p$ such that $a$ lies in $G(x,p)$. The legs of the colimit cocone are jointly epimorphic, therefore $a$ is contained in at least one of them (as it is $\bigvee $-indecomposable), so $P$ is nonempty. Given a chain $(p_0\leq \dots p_{\alpha }\leq \dots )_{\alpha <\lambda }$ in $P$, its union $p_{\lambda }$ also belongs to $P$ as $G$ was $p_{\infty }$-flat. So by Zorn's lemma there is a maximal element $\overline{p}$ in $P$. We claim that $\overline{p}$ is the largest element of $P$.

    For that it suffices to show that $P$ is directed. So assume that we have a commutative diagram

\[\begin{tikzcd}
	& {G(x,p)} \\
	a && {\widehat{G}(x)} \\
	& {G(x,p')}
	\arrow[hook, from=1-2, to=2-3]
	\arrow[dashed, hook, from=2-1, to=1-2]
	\arrow[hook, from=2-1, to=2-3]
	\arrow[dashed, hook, from=2-1, to=3-2]
	\arrow[hook, from=3-2, to=2-3]
\end{tikzcd}\]
We need to find $p''\geq p,p'$ with $a\subseteq G(x,p'')$. As $a$ is prime, it is enough to prove that the pullback (intersection) of $G(x,p)$ and $G(x,p')$ is covered by subobjects of the form $G(x,p'')$. As $Lan_YG$ is lex cocontinuous, it is enough to prove that the intersection of 
\[
Spec(\mathcal{C})(-,(x,p)) \hookrightarrow Spec(\mathcal{C})(-,x) \hookleftarrow Spec(\mathcal{C})(-,(x,p'))
\]
can be covered by subobjects of the form $Spec(\mathcal{C})(-,(x,p''))$ in $\mathbf{Set}^{Spec(\mathcal{C})^{op}}$. This is clear; if $[f]:(z,q)\to x$ is a $q$-germ which is both $p$ and $p'$-continuous, then $f_!q\geq p, p'$ and $[f]\in Spec(\mathcal{C})((z,q),(x,f_!q))$. 

So $\overline{p}$ is the top element of $P$. Take $u\in p_0$. By assumption $a\subseteq colim _{p\in [u]^{op}}G(x,p)$, and as $a$ is prime, $a\subseteq G(x,p')$ for some $p'\in [u]$. Hence $p'\leq \overline{p}$, and therefore $u\in \overline{p}$. We got that $p_0\leq \overline{p}$. As a result $a\subseteq G(x,\overline{p})\subseteq G(x,p_0)$ and this is what we wanted to show.

    $\widehat{(-)}\circ \widetilde{(-)}\cong id$: We need that the induced map
    \[
    colim _{p\in S_{\mathcal{C}}(x)^{op}} \bigcap _{u\in p}F(u) \to F(x) 
    \]
    is iso.

    It is mono, i.e.~$\bigcup _p \widetilde{F}(x,p)$ coincides with the colimit. Indeed, the union is effective, so it is the colimit of the diagram 
\[\begin{tikzcd}
	{\widetilde{F}(x,p_i)} & {\widetilde{F}(x,p_j)} & {\widetilde{F}(x,p_k)} & \dots \\
	{R_{i,j}} & {R_{i,k}} & \dots
	\arrow[from=2-1, to=1-1]
	\arrow[from=2-1, to=1-2]
	\arrow[from=2-2, to=1-1]
	\arrow[from=2-2, to=1-3]
\end{tikzcd}\]
where $R_{i,j}$ is the intersection of $\widetilde{F}(x,p_i)$ and $\widetilde{F}(x,p_j)$. It suffices to show that each $R_{i,j}$ can be covered by the subobjects $\widetilde{F}(x,\overline{p})$ for $\overline{p}\geq p_i,p_j$, as in this case the two colimits coincide. But $\widetilde{F}(x,p_i) \cap \widetilde{F}(x,p_j)= \bigcap _{u\in p_i \cup p_j} F(u)$ can be written as the union of primes, and given a prime $a\subseteq \bigcap _{u\in p_i \cup p_j} F(u)$ we have $a\subseteq \widetilde{F}(x,tp(a))$ and $tp(a)\geq p_i,p_j$.

Epi: $F(x)$ is the union of primes, and if $a\subseteq F(x)$ is such a subobject, then it is contained in $\widetilde{F}(x,tp(a))$.
\end{proof}

\begin{definition}
\label{Edef2}
    Let $E$ be the set of families
\[
\left[
\begin{tikzcd}
	{(x,p_0)} & {\dots } & {(x,p_i)} & {\dots } \\
	&& {(y,q)}
	\arrow["{[f]}"', from=1-1, to=2-3]
	\arrow["{[f]}", from=1-3, to=2-3]
\end{tikzcd}
\right]_{p_i:\ f_!p_i\geq q}
\]
 where $f:x\to y$ is an effective epimorphism. We write $\tau _E$ for the generated Grothendieck topology.   
\end{definition}

\begin{remark}
\label{Ebar2}
    Proposition \ref{Ebar} remains true, so this $\tau _E$ has the same explicit description as the previous one ($\tau _{E^{\neg}}$). The proof is identical except that in the verification of pullback-stability we shall write  $f_!g'_!p'=g_!f'_!p'\geq g_!q'\geq q$.
\end{remark}

\begin{proposition}
\label{strictlyeq}
    Assume that $((x_i,p_i)\xrightarrow{[f_i]} (y,q))_i$ is a cover in $\tau _E$. Then there is an $i_0$, such that $(f_{i_0})_!p_{i_0}=q$.
\end{proposition}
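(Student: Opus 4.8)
The plan is to prove the statement first for the \emph{basic} covers of Proposition \ref{Ebar} (valid here by Remark \ref{Ebar2}) — the rooted cotrees assembled from $E$-families and isomorphisms — and then transfer it to an arbitrary $\tau_E$-cover. The engine behind everything is that $S_{\mathcal{C}}$ preserves effective epimorphisms: if $f:x\to y$ is an effective epi, then $f_!:S_{\mathcal{C}}(x)\to S_{\mathcal{C}}(y)$ is a surjection of sets, so \emph{every} prime filter $q$ on $Sub_{\mathcal{C}}(y)$ is hit \emph{exactly}, i.e.\ $f_!p_*=q$ for some $p_*$. In particular, inside any $E$-family $((x,p_i)\xrightarrow{[f]}(y,q))_{f_!p_i\ge q}$ the leg indexed by such a $p_*$ is genuinely present (since $q\ge q$ puts $p_*$ among the $p_i$) and achieves the strict equality we are after.

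The key lemma I would then isolate is: every basic $\tau_E$-cover of $(y,q)$ contains a leg $[g]:(z,r)\to(y,q)$ with $g_!r=q$. This I would prove by induction on the height of the cotree, using throughout that $(-)_!$ is functorial and depends only on the germ. For height $0$ the cover is an identity (or isomorphism); note that for an isomorphism $[g]:(y',q')\to(y,q)$ with inverse $[g^{-1}]$, continuity of both maps gives $q\le g_!q'$ and $q'\le (g^{-1})_!q$, whence $g_!q'\le g_!(g^{-1})_!q=(g\circ g^{-1})_!q=q$, so $g_!q'=q$. For the inductive step I examine the predecessors of the root: if it has a single isomorphism-predecessor $[g]:(y',q')\to(y,q)$, the inductive hypothesis on the subtree over $(y',q')$ yields $[h]:(z,r)\to(y',q')$ with $h_!r=q'$, and then $(g\circ h)_!r=g_!q'=q$; if the predecessors form an $E$-family over an effective epi $f:x\to y$, I pass to the distinguished leg $(x,p_*)$ with $f_!p_*=q$ from the previous paragraph, apply the inductive hypothesis to the subtree over $(x,p_*)$ to obtain $[h]:(z,r)\to(x,p_*)$ with $h_!r=p_*$, and conclude $(f\circ h)_!r=f_!p_*=q$.

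Finally I would transfer this to a general cover. If $((x_i,p_i)\xrightarrow{[f_i]}(y,q))_i$ is a $\tau_E$-cover, then the sieve it generates is covering, hence contains a basic cotree cover; by the lemma that basic cover has a leg $[g]:(z,r)\to(y,q)$ with $g_!r=q$. Since $[g]$ lies in the generated sieve it factors as $[g]=[f_{i_0}]\circ[k]$ for some index $i_0$ and some $[k]:(z,r)\to(x_{i_0},p_{i_0})$. Continuity of $[k]$ gives $p_{i_0}\le k_!r$, continuity of the leg $[f_{i_0}]$ gives $q\le (f_{i_0})_!p_{i_0}$, and monotonicity of $(f_{i_0})_!$ yields the squeeze
\[
q=g_!r=(f_{i_0})_!(k_!r)\ge (f_{i_0})_!p_{i_0}\ge q,
\]
forcing $(f_{i_0})_!p_{i_0}=q$, which is exactly the desired $i_0$.

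The one genuinely non-formal input — and hence the main obstacle — is the existence of a strict-equality leg within each $E$-family; this is precisely the surjectivity of $f_!$, i.e.\ the fact that $S_{\mathcal{C}}$ preserves effective epimorphisms. Everything else is functoriality of $(-)_!$, germ-level composition, and the monotonicity squeeze, which I expect to be routine.
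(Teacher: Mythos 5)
Your proof is correct and follows essentially the same route as the paper's: both reduce to the basic cotree covers of Proposition \ref{Ebar}, use that $S_{\mathcal{C}}$ preserves effective epimorphisms to produce a strict-equality leg in each $E$-family (and in each isomorphism), and then compose such legs along a finite branch of the cotree. The only difference is that you additionally spell out the passage from basic covers to arbitrary $\tau_E$-covers via factoring through some $[f_{i_0}]$ and the monotonicity squeeze $q = g_!r \geq (f_{i_0})_!p_{i_0} \geq q$, a step the paper leaves implicit.
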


\begin{proof}
    This is satisfied by each $E$-family, in other terms $S_{\mathcal{C}}$ preserves effective epis (this was left as an exercise in Section 2). Also, it is satisfied by the isomorphisms. Given a composite cotree as in Proposition \ref{Ebar}, we define such a branch inductively. As there is no infinite branch, we will arrive to a leaf. The composite of this branch yields a leg $[f]:(x,p)\to (y,q)$ with $f_!p=q$.
\end{proof}

\begin{remark}
    Our goal is to study $Sh(Spec(\mathcal{C}),\tau _E)$. In \cite{joyaltoptp} the "topos of existential types" is defined as $Sh(Spec(\mathcal{C}),\tau _{E_0})$ where $E_0$ is the same as $E$ from Definition \ref{Edef2}, except that they only keep those legs $[f]:(x,p_i)\to (y,q)$ which satisfy the strict equality $f_!p_i =q$. It is unclear to us, whether this set is "pullback-stable" (the easy argument of Remark \ref{Ebar2} fails, as $f'_!p'=q'$ only implies $f_!g'_!p'=g_!f'_!p'=g_!q'\geq q$, while in this case we would need a strict equality). This indicates that our topology might be more natural, hence we proceed with that.
\end{remark}

\begin{proposition}
\label{restrtoEpres}
    $\mathcal{C}$ is coherent, $\mathcal{E}$ is a prime-generated Grothendieck topos. The equivalence of Proposition \ref{lexvequiv} restricts between the full subcategories $E-Flat_{p_{\infty }}(Spec(\mathcal{C}),\mathcal{E})$ 
(of $p_{\infty }$-flat, $E$-preserving functors) and $\mathbf{Coh}(\mathcal{C},\mathcal{E})$.
\end{proposition}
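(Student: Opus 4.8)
The plan is to exploit that the two functors are already matched up by the equivalence of Proposition \ref{lexvequiv}: a $p_{\infty}$-flat $G$ has $\widehat{G}$ automatically lex and finite-union preserving, and an $F\in\mathbf{Lex}_{\vee}(\mathcal{C},\mathcal{E})$ has $\widetilde{F}$ automatically $p_{\infty}$-flat. Since $\mathbf{Coh}(\mathcal{C},\mathcal{E})$ is cut out of $\mathbf{Lex}_{\vee}(\mathcal{C},\mathcal{E})$ by preservation of effective epimorphisms (given lex, this is equivalent to preserving images), and $E\text{-}Flat_{p_{\infty}}$ is cut out of $Flat_{p_{\infty}}$ by $E$-preservation, the whole statement reduces to the single biconditional: $\widetilde{F}$ is $E$-preserving iff $F$ preserves effective epis, and dually $\widehat{G}$ preserves effective epis iff $G$ is $E$-preserving. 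As both subcategories are full, establishing these two implications yields the restricted equivalence. This is the prime-filter analogue of the (unlabelled) proposition preceding Theorem \ref{main1}, so I would mirror that argument, replacing ``connected'' by ``prime'' and the coproduct $\bigsqcup$ by the colimit $\mathrm{colim}$.

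For the direction $F\in\mathbf{Coh}\Rightarrow\widetilde{F}$ is $E$-preserving: let $f\colon x\to y$ be effective epi, so $Ff\colon Fx\to Fy$ is epic. I must show the legs $\widetilde{F}([f])\colon\widetilde{F}(x,p_i)\to\widetilde{F}(y,q)$, indexed by the $p_i$ with $f_!p_i\ge q$, jointly cover $\widetilde{F}(y,q)=\bigcap_{v\in q}Fv$. Since $\mathcal{E}$ is prime-generated it suffices to catch every prime $c\subseteq\widetilde{F}(y,q)$. Pulling $c$ back along the epi $Ff$ and covering the fibre by primes, the primality of $c$ lets me realize $c$ as the image $Ff(a)$ of a single prime $a\subseteq Fx$. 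Writing $p=tp(a)$ and using that $F$ is lex (so $(Ff)^{-1}(Fv)=F(f^{-1}v)$), I compute $tp(Ff(a))=f_!p$; hence $q\le tp(c)=f_!p$, so $[f]\colon(x,p)\to(y,q)$ is a genuine leg of the $E$-family, and $c=Ff(a)$ already lies in its image because $a\subseteq\widetilde{F}(x,p)$.

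For the converse $G\in E\text{-}Flat_{p_{\infty}}\Rightarrow\widehat{G}$ preserves effective epis: I would use $\widehat{G}\cong Lan_YG\circ\varphi_0$ together with the fact that the colimit cocone legs $\ell_q\colon G(y,q)\to\widehat{G}(y)$ are jointly epic, so it suffices to show each image of $\ell_q$ is contained in the image of $\widehat{G}(f)$. Precomposing $\ell_q$ with the $E$-family $G([f])\colon G(x,p_i)\to G(y,q)$ (jointly epic by hypothesis), I factor the germ as $(x,p_i)\xrightarrow{[f]}(y,f_!p_i)\xrightarrow{[\mathrm{id}]}(y,q)$ and use cocone compatibility to identify $\ell_q\circ G([f])$ with $\widehat{G}(f)\circ\lambda_{p_i}$, where $\lambda_{p_i}$ is the cocone leg into $\widehat{G}(x)$; this visibly factors through $\widehat{G}(f)$. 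Taking unions over $i$ and then over $q$ makes $\widehat{G}(f)$ epic.

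I expect the main obstacle to be the first direction, specifically producing a \emph{legitimate} leg of the $E$-family. The naive approach of covering $Fx$ by primes $a_i$ and pushing them forward to primes $b_i=Ff(a_i)$ covering $Fy$ fails: a prime $c\subseteq\widetilde{F}(y,q)$ with $c\subseteq b_i$ only yields $tp(c)\ge tp(b_i)=f_!tp(a_i)$, which is the wrong inequality and does not force $f_!tp(a_i)\ge q$, so $(x,tp(a_i))\to(y,q)$ need not be a morphism of $Spec(\mathcal{C})$. The fix is to realize $c$ exactly as the image of a prime in the fibre over $c$, so that $tp(c)=f_!p$ on the nose and the continuity $f_!p\ge q$ comes for free from $c\subseteq\widetilde{F}(y,q)$. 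Note that $p_{\infty}$-flatness plays no direct role in either implication; it has already been consumed in Proposition \ref{lexvequiv}, and the non-filtered colimit in the converse is handled entirely through images of cocone legs rather than any indexed-sum decomposition.
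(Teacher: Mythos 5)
Your proposal is correct and takes essentially the same approach as the paper. For the first direction the paper covers the preimage $F(f)^{-1}(\widetilde{F}(y,q))$ by primes $a_i$ and checks $f_!(tp(a_i))\geq q$ directly -- your prime-by-prime variant, realizing each prime $c\subseteq \widetilde{F}(y,q)$ as $Ff(a)$ with $tp(c)=f_!(tp(a))$, is the same computation -- and for the second direction both arguments compose the jointly epimorphic colimit cocone legs with the $E$-family covers of each $G(y,q)$ and observe that these composites factor through $\widehat{G}(f)$.
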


\begin{proof}
    If $F$ preserves effective epis then $\widetilde{F}$ is $E$-preserving: $F(f)^{-1}(\widetilde{F}(y,q))$ can be written as a union of primes $\bigcup _i a_i$, and $f_!(tp(a_i))\geq q$, as $v\in q$ implies $a\subseteq F(f)^{-1}(Fv)=F(f^{-1}v)$. So the composite family $(\widetilde{F}(x,tp(a_i))\hookrightarrow F(x) \xtwoheadrightarrow{F(f)} F(y))_i$ covers $\widetilde{F}(y,q)$.

    If $G$ is $E$-preserving then $\widehat{G}$ preserves effective epis: We need that the upper horizontal arrow in

\[
\adjustbox{scale=0.8}{
\begin{tikzcd}
	& {colim_{p\in S_{\mathcal{C}}(x)^{op}}G(x,p)} && {colim_{q\in S_{\mathcal{C}}(y)^{op}}G(y,q)} \\
	\\
	& {G(x,p_i)} && {G(y,q_0)} \\
	{G(x,p_j)}
	\arrow[from=1-2, to=1-4]
	\arrow[hook, from=3-2, to=1-2]
	\arrow[dashed, from=3-2, to=3-4]
	\arrow[hook, from=3-4, to=1-4]
	\arrow[hook, from=4-1, to=1-2]
	\arrow[dashed, from=4-1, to=3-4]
\end{tikzcd}
}
\]
    is effective epi. By assumption each $G(y,q_0)$ is covered by the dashed arrows (for $f_!(p_i)\geq q_0$). The composite tree is covering $colim _q G(y,q)$, and each leg factors through the upper horizontal arrow, which is therefore effective epimorphic.
\end{proof}

\begin{theorem}
\label{main2}
     Let $\mathcal{C}$ be a coherent category. Write $\varphi :\mathcal{C}\to Sh(Spec(\mathcal{C}),\tau _E)$ for the functor taking $y$ to $Spec(\mathcal{C})(-,y)^{\#}$ (sheafification of $\varphi _0(y)$). Then:
    \begin{itemize}
        \item[i)] $\varphi $ is coherent.
        \item[ii)] If $\mathcal{E}$ is a prime-generated Grothendieck topos, then 
        \[ \varphi ^{\circ}:Fun^*_{p_{\infty}}(Sh(Spec(\mathcal{C})),\mathcal{E})\to \mathbf{Coh}(\mathcal{C},\mathcal{E})\]
        is an equivalence. Here $Fun^*_{p_{\infty }}$ denotes lex cocontinuous functors, with the additional property, that whenever $(p_0\leq \dots p_{\alpha }\leq \dots )_{\alpha <\lambda }$ is a chain of prime-filters with union $p_{\lambda }$, the cone $\#Y(x,p_0)\hookleftarrow \dots \#Y(x,p_{\alpha })\hookleftarrow \dots \#Y(x,p_{\lambda})$ is taken to a limit (intersection) in $\mathcal{E}$.
    \end{itemize}
\end{theorem}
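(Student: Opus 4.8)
The plan is to run the same three-step assembly as in the proof of Theorem \ref{main1}, now with disjoint coproducts replaced by the (non-filtered) colimits of Proposition \ref{lexvequiv} and with the extra $p_{\infty}$-bookkeeping. Concretely, for a prime-generated $\mathcal{E}$ I would string together
\[
Fun^*_{p_{\infty}}(Sh(Spec(\mathcal{C})),\mathcal{E}) \xrightarrow{(\#Y)^{\circ}} E-Flat_{p_{\infty}}(Spec(\mathcal{C}),\mathcal{E}) \xrightarrow{\widehat{(-)}} \mathbf{Coh}(\mathcal{C},\mathcal{E}),
\]
where the second arrow is the equivalence of Proposition \ref{restrtoEpres} (quasi-inverse $\widetilde{(-)}$), and the first is the restriction of the standard equivalence $(\#Y)^{\circ}: Fun^*(Sh(Spec(\mathcal{C})),\mathcal{E}) \to E-Flat(Spec(\mathcal{C}),\mathcal{E})$ coming from the universal property of the sheaf topos, exactly as in Theorem \ref{main1} (using Remark \ref{Ebar2} that $\tau_E$ is generated by $E$, and \cite[Theorem 3.15]{presheaftype}); its quasi-inverse is $Lan_{\#Y}$. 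It then remains only to identify the composite with $\varphi^{\circ}$.

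The restriction of $(\#Y)^{\circ}$ to the $p_{\infty}$-subcategories is essentially forced by the definitions: for $G=M^*\circ \#Y$, flatness of $G$ is automatic (since $M^*$ is lex cocontinuous and $\#Y$ is flat) and $E$-preservation is automatic, while the remaining clause of $p_{\infty}$-flatness—that $G(x,p_0)\hookleftarrow\cdots\hookleftarrow G(x,p_{\lambda})$ be an intersection—is literally the clause defining $Fun^*_{p_{\infty}}$, namely that $M^*$ carry each cone $\#Y(x,p_0)\hookleftarrow\cdots\hookleftarrow \#Y(x,p_{\lambda})$ to an intersection. Hence $(\#Y)^{\circ}$ and $Lan_{\#Y}$ cut down to mutually quasi-inverse equivalences between the two full subcategories. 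For the final identification I would reuse the naturality square of the Theorem \ref{main1} proof: since $M^*$ and $\#$ are cocontinuous and $\varphi_0(x)=Spec(\mathcal{C})(-,x)\cong colim_{p\in S_{\mathcal{C}}(x)^{op}}Spec(\mathcal{C})(-,(x,p))$ (the presentation already used in Proposition \ref{lexvequiv}), one computes
\[
\widehat{M^*\circ \#Y}(x)=colim_{p}\, M^*\#Y(x,p)\cong M^*\bigl(colim_{p}\, \#Y(x,p)\bigr)\cong M^*(\#\varphi_0(x))=\varphi^{\circ}(M^*)(x),
\]
naturally in $f:x\to y$ and in $M^*$, which yields the desired isomorphism of functors and proves (ii).

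For (i) I would argue directly that $\varphi=\#\circ\varphi_0$ is coherent, rather than deduce it from the identity functor: $\varphi_0$ is lex and finite-union preserving by the proposition preceding Definition \ref{Edef2}, and $\#$ is lex and preserves finite unions (being a left-exact reflection), so $\varphi$ preserves finite limits and finite unions (including $\emptyset$); and $\varphi$ preserves effective epis because for an effective epi $f:x\to y$ the family $((x,p_i)\to(y,q))_{f_!p_i\geq q}$ is exactly the $E$-family, hence a $\tau_E$-cover, so $\#\varphi_0(f)$ is epimorphic onto each $\#Y(y,q)$ and therefore onto $\varphi(y)$. I expect the main subtlety to lie not in any single step but in the bookkeeping behind the $p_{\infty}$-restriction: one must check that the cones $\#Y(x,p_{\alpha})$ through which $Fun^*_{p_{\infty}}$ is phrased really do match the chain-intersection clause of $p_{\infty}$-flatness under $G=M^*\circ\#Y$, i.e.\ that the definition of $Fun^*_{p_{\infty}}$ is precisely the pullback of $p_{\infty}$-flatness along $(\#Y)^{\circ}$. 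Equivalently, this is where the fact that $\#Y$ is itself $p_{\infty}$-flat (sheafification preserving these chain-intersections, which at the presheaf level hold pointwise) would have to enter, should one prefer instead to exhibit $\varphi$ as the image of $1_{Sh(Spec(\mathcal{C}))}$.
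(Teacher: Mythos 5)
Your proposal is correct and follows essentially the same route as the paper: the same factorization of $\varphi^{\circ}$ through $(\#Y)^{\circ}$ and $\widehat{(-)}$ (Proposition \ref{restrtoEpres}), the same observation that the $p_{\infty}$-conditions on the two sides correspond literally under $G=M^*\circ\#Y$, and the same cocontinuity computation $colim_p M^*\#Y(x,p)\cong M^*\#(colim_p Y(x,p))\cong M^*\#\varphi_0(x)$ identifying the composite with $\varphi^{\circ}$. Your direct verification of part (i) is just the paper's argument (the second half of the proof of Proposition \ref{restrtoEpres} instantiated at $G=\#Y$) written out explicitly, so there is no substantive difference.
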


\begin{proof}
$i)$ We already know that $\varphi =\# \circ \varphi _0$ preserves finite limits and finite unions. To see that it preserves effective epis apply the second half of the proof of Proposition \ref{restrtoEpres} with $G=\#Y$ (here we didn't use that $\mathcal{E}$ was prime-generated).

$ii)$  The equivalence $(\# Y)^{\circ}:Fun^*(Sh(Spec(\mathcal{C})),\mathcal{E})\to E-Flat(Spec(\mathcal{C})),\mathcal{E})$ restricts between $Fun^*_{p_{\infty }}(Sh(Spec(\mathcal{C})),\mathcal{E})$ and $E-Flat_{p_{\infty}}(Spec(\mathcal{C}),\mathcal{E})$. Hence
\[
Fun^*_{p_{\infty }}(Sh(Spec(\mathcal{C})),\mathcal{E})\xrightarrow{(\#Y)^{\circ }} E-Flat_{p_{\infty}}(Spec(\mathcal{C}),\mathcal{E}) \xrightarrow{\widehat{(-)}} \mathbf{Coh}(\mathcal{C},\mathcal{E})
\]
    is an equivalence. It remains to check that it is isomorphic to $\varphi ^{\circ }$. 

Take $M^*:Sh(Spec(\mathcal{C}))\to \mathcal{E}$ cocontinuous, $p_{\infty }$-lex. We have
\[
\adjustbox{width=\textwidth}{
\begin{tikzcd}
	{colim _{p\in S_{\mathcal{C}}(x)^{op}}(M^*\circ \# \circ Y)(x,p)} && {M^*\circ \#\ (colim _{p} Y(x,p))} & {M^*\circ \# \ (Spec(\mathcal{C})(-,x))} \\
	\\
	{colim _{q\in S_{\mathcal{C}}(y)^{op}}(M^*\circ \# \circ Y)(y,q)} && {M^*\circ \# \ (colim _{q} Y(y,q))} & {M^*\circ \# \ (Spec(\mathcal{C})(-,y))}
	\arrow["{\cong }", from=1-1, to=1-3]
	\arrow["{\left(M^*\circ \# \circ Y \ ([f]) \right)_p}", from=1-1, to=3-1]
	\arrow[equals, from=1-3, to=1-4]
	\arrow[from=1-3, to=3-3]
	\arrow["{M^*\circ \# \ (f_{\circ })}", from=1-4, to=3-4]
	\arrow["{\cong }", from=3-1, to=3-3]
	\arrow[equals, from=3-3, to=3-4]
\end{tikzcd}
}
\]
which is natural in $M$.

\end{proof}

\begin{remark}
    The induced map $\varphi ^*: Sh(\mathcal{C})\leftrightarrow Sh(Spec(\mathcal{C})):\varphi _*$ is a geometric surjection (i.e.~$\varphi ^*$ is conservative). The argument is identical to the one in Remark \ref{varphigeomsurj}.
\end{remark}

\section{Comparing three cases}

\begin{proposition}
\label{compatible}
   $\mathcal{C}$ is a coherent category. Let $(x,p)\xrightarrow{[f]}(y,q)\xleftarrow{[f']}(x',p')$ be a cospan in $Spec(\mathcal{C})$. Then it can be completed to a commutative square
\[
\adjustbox{scale=0.9}{
\begin{tikzcd}
	{(x'',p'')} & {(x',p')} \\
	{(x,p)} & {(y,q)}
	\arrow["{[g']}", from=1-1, to=1-2]
	\arrow["{[g]}"', from=1-1, to=2-1]
	\arrow["{[f']}", from=1-2, to=2-2]
	\arrow["{[f]}"', from=2-1, to=2-2]
\end{tikzcd}
}
\]
iff $f_!p$ and $f'_!p'$ are compatible (i.e.~their union has f.i.p.).
\end{proposition}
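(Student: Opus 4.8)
The plan is to read both directions off the behaviour of the pushforward operation $(-)_!$ on prime filters, using that germs compose contravariantly via preimages, so that $(hg)_! = h_! g_!$ (because $(hg)^{-1} = g^{-1} h^{-1}$) and $(-)_!$ is monotone for $\subseteq$.

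\emph{Forward direction.} Suppose the square exists, with legs $[g]:(x'',p'')\to(x,p)$ and $[g']:(x'',p'')\to(x',p')$. Continuity gives $p\le g_!p''$ and $p'\le g'_!p''$, hence $f_!p\le f_!g_!p''=(fg)_!p''$ and $f'_!p'\le (f'g')_!p''$. Commutativity of the square means $fg$ and $f'g'$ agree as $p''$-germs, so $(fg)_!p''=(f'g')_!p''=:r$. Thus $f_!p$ and $f'_!p'$ both sit inside the single filter $r$, which is proper (as $(fg)^{-1}(0)=0\notin p''$). A union of subsets of a proper filter automatically has the finite intersection property, so $f_!p$ and $f'_!p'$ are compatible.

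\emph{Backward direction.} Assume compatibility. I would pick representatives $f\colon x\supseteq u\to y$ with $u\in p$ and $f'\colon x'\supseteq u'\to y$ with $u'\in p'$, and form the pullback $P=u\times_y u'$ in $\mathcal{C}$ (coherent categories have finite limits), with projections $g\colon P\to u\hookrightarrow x$ and $g'\colon P\to u'\hookrightarrow x'$. Since $fg=f'g'$ as honest maps $P\to y$, for \emph{any} prime filter $p''$ on $Sub_{\mathcal{C}}(P)$ the resulting square of germs commutes. It therefore suffices to produce $p''$ making $[g]$ and $[g']$ continuous, i.e.\ with $g^{-1}(w)\in p''$ for all $w\in p$ and $g'^{-1}(w')\in p''$ for all $w'\in p'$. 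By the prime filter extension theorem for distributive lattices, such a $p''$ exists iff the family $\{g^{-1}(w):w\in p\}\cup\{g'^{-1}(w'):w'\in p'\}$ has the finite intersection property.

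Because $g^{-1},g'^{-1}$ preserve finite meets and $p,p'$ are filters, a finite meet from this family has the form $g^{-1}(w)\wedge g'^{-1}(w')$; replacing $w,w'$ by $w\wedge u$ and $w'\wedge u'$ I may assume $w\le u$ and $w'\le u'$, in which case $g^{-1}(w)\wedge g'^{-1}(w')$ is exactly the pullback $w\times_y w'$. The key lemma, and the step I expect to be the main obstacle, is the claim that in a coherent category $w\times_y w'$ is initial iff the images satisfy $f(w)\wedge f'(w')=0$ in $Sub_{\mathcal{C}}(y)$: the cover $w\twoheadrightarrow f(w)$ pulls back (covers are stable under pullback) to a cover $w\times_y w'\twoheadrightarrow f(w)\wedge f'(w')$, and since $0$ is strict, the domain of a cover is initial exactly when its codomain is. Finally, $w\in p$ forces $w\le f^{-1}(f(w))$, whence $f(w)\in f_!p$, and likewise $f'(w')\in f'_!p'$; compatibility then gives $f(w)\wedge f'(w')\ne 0$, so $w\times_y w'\ne 0$. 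This yields the finite intersection property, produces $p''$, and completes the square.
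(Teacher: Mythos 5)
Your proof is correct, and its skeleton is the same as the paper's: the forward direction is the identical observation that $f_!p$ and $f'_!p'$ both land in the common proper filter $(fg)_!p''=(f'g')_!p''$, and the backward direction likewise passes to representatives $f\colon u\to y$, $f'\colon u'\to y$, forms the pullback $u\times_y u'$, and produces $p''$ via the prime filter extension theorem after reducing the finite intersection property to the single condition $w\times_y w'\neq 0$ for $w\in p$, $w'\in p'$ (the paper's completion of the original cospan by $[id]$'s versus your composites $P\to u\hookrightarrow x$ is cosmetic). The one genuine divergence is how that key nonvanishing is established: the paper computes $\exists_f\exists_{\pi_1}(\pi_1^{-1}(u)\cap\pi_2^{-1}(u'))=\exists_fu\cap\exists_{f'}u'$ by Frobenius reciprocity and Beck--Chevalley, citing \cite[Proposition 39]{GARNER2020102831}, whereas you prove only the weak form of this identity ($w\times_y w'=0$ iff $\exists_f w\wedge\exists_{f'}w'=0$) directly from pullback-stability of covers together with strictness of the initial object. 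Both arguments are sound; yours is self-contained and more elementary, needing only regular-category facts, while the paper's route yields the stronger on-the-nose image formula. One small compression in your key lemma: pulling back the single cover $w\twoheadrightarrow \exists_f w$ along $\exists_f w\wedge\exists_{f'}w'\hookrightarrow \exists_f w$ gives a cover onto the meet whose domain is $w\wedge f^{-1}(\exists_{f'}w')$, not yet $w\times_y w'$; you need a second pullback of $w'\twoheadrightarrow \exists_{f'}w'$ (or, equivalently, pull back the product cover $w\times w'\twoheadrightarrow \exists_f w\times\exists_{f'}w'$) to land on $w\times_y w'\twoheadrightarrow \exists_f w\wedge\exists_{f'}w'$ -- a routine fix that does not affect the argument.
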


\begin{proof}
    $\Rightarrow $ is clear: $f'_!p'\leq f'_!(g'_!p'')=f_!(g_!p'')\geq f_!p$.

    $\Leftarrow $: Assume that $f:x\supseteq u_0\to y$ and $f':x'\supseteq u'_0\to y$ with $u_0\in p$, $u'_0\in p'$. First we show that $(u_0,p)\xrightarrow{[f]}(y,q)\xleftarrow{[f']}(u_0',p')$ can be completed to a commutative square ($p$ stands for $\restr{p}{u_0}$).

    We claim that there is $p''$ making $\pi _1$ and $\pi _2$ continuous in 
\[
\adjustbox{scale=0.9}{
\begin{tikzcd}
	{(u_0\times _yu_0',p'')} & {(u_0',p')} \\
	{(u_0,p)} & {(y,q)}
	\arrow["{[\pi _2]}", from=1-1, to=1-2]
	\arrow["{[\pi _1]}"', from=1-1, to=2-1]
	\arrow["{[f']}", from=1-2, to=2-2]
	\arrow["{[f]}"', from=2-1, to=2-2]
\end{tikzcd}
}
\]
That is, we need that $u_0\supseteq u\in p$, $u_0'\supseteq u'\in p'$ implies $\pi _1^{-1}(u)\cap \pi _2^{-1}(u')\neq \emptyset $, equivalently: $\exists _f \exists _{\pi _1}(\pi _1^{-1}(u)\cap \pi _2^{-1}(u'))\neq \emptyset$. But the latter expression can be written as
    \[
        \exists _f \exists _{\pi _1}(\pi _1^{-1}(u)\cap \pi _2^{-1}(u'))  = \exists _f(u\cap \exists _{\pi _1}(\pi _2^{-1}u'))
        = \exists _f(u\cap f^{-1}(\exists _{f'}u'))=\exists _fu \cap \exists _{f'}u'
    \]
    (see \cite[Proposition 39]{GARNER2020102831}). This is nonzero as $f_!p$ and $f'_!p'$ are compatible, and $f^{-1}(\exists _fu)\geq u \in p$, $f'^{-1}(\exists _{f'}u')\geq u' \in p'$, so $\exists _fu \in f_!p$ and $\exists _{f'}u'\in f'_!p'$.

    The original cospan is completed to a square as
\[
\adjustbox{scale=0.9}{
\begin{tikzcd}
	{(u_0\times _yu_0',p'')} & {(u_0',p')} & {(x',p')} \\
	{(u_0,p)} \\
	{(x,p)} && {(y,q)}
	\arrow["{[\pi _2]}", from=1-1, to=1-2]
	\arrow["{[\pi _1]}"', from=1-1, to=2-1]
	\arrow["{[id]}", from=1-2, to=1-3]
	\arrow["{[f']}"{description}, from=1-2, to=3-3]
	\arrow["{[f']}", from=1-3, to=3-3]
	\arrow["{[id]}"', from=2-1, to=3-1]
	\arrow["{[f]}"{description}, from=2-1, to=3-3]
	\arrow["{[f]}"', from=3-1, to=3-3]
\end{tikzcd}
}
\]
\end{proof}
\vspace{-5mm}

\begin{theorem}
\label{weaklylc}
    Let $\mathcal{C}$ be a coherent category. Then $Sh(Spec(\mathcal{C}),\tau _E)$ is locally connected.
\end{theorem}

\begin{proof}
    As the epimorphic image of a connected object is connected, it is enough to prove that $Sh(Spec(\mathcal{C}))$ has a generating set of connected objects. We claim that each sheafified representable $\#Y(y,q)=Spec(\mathcal{C})(-,(y,q))^{\#}$ is connected.

    First note that $\#Y(y,q)\neq \emptyset$. Indeed, if the empty family was covering $\#Y(y,q)$, then by \cite[Proposition 1.3.3(2)]{makkai} the empty family on $(y,q)$ would belong to $\tau _E$, which is not the case.

    Write it as a disjoint union: $\#Y(y,q)= A\sqcup B$. By \cite[Lemma 6.1.2]{makkai} we find a cover $(\#Y(x_i,p_i)\to  A)_i$ and a cover $(\#Y(x'_j,p'_j)\to  B)_j$, such that the composites $\#Y(x_i,p_i)\to \#Y(y,q)$ are of the form $\#Y([f_i])$ and the composites $\#Y(x'_j,p'_j)\to \#Y(y,q)$ are of the form $\#Y([f'_j])$.

    The union of these two families covers $\#Y(y,q)$. By \cite[Proposition 1.3.3(2)]{makkai} again, $((x_i,p_i)\xrightarrow{[f_i]} (y,q) \xleftarrow{[f'_j]} (x_j',p'_j))_{i,j}$ belongs to $\tau _E$. Therefore, by Proposition \ref{strictlyeq}, there is a leg, say $(x_0,p_0)\xrightarrow{[f_0]}(y,q)$, with $(f_0)_!p_0=q$. In particular $(f_0)_!p_0$ is compatible with $(f'_j)_!p'_j$ for any $j\in J$, hence by Proposition \ref{compatible} there is a square
\[\begin{tikzcd}
	{(x_0,p_0)} & {(y,q)} \\
	{(x'',p'')} & {(x_j',p_j')}
	\arrow["{[f_0]}", from=1-1, to=1-2]
	\arrow["{[g]}", from=2-1, to=1-1]
	\arrow["{[g']}"', from=2-1, to=2-2]
	\arrow["{[f'_j]}"', from=2-2, to=1-2]
\end{tikzcd}\]
    But $\#Y(x'',p'')\neq \emptyset$, which contradicts that $A$ and $B$ were disjoint. So we get $J=\emptyset $ and $B=\emptyset$.
\end{proof}

\begin{remark}
\label{makkaiversion}
    $\mathcal{C}$ is a coherent category. We briefly relate a third version to the previous two, namely Makkai's topos of types from \cite{10.1007/BFb0090947}. It is defined as $Sh(Spec(\mathcal{C}),\tau _{sat})$ where $\tau _{sat}$ is the topology generated by the singleton families $(x,p)\xrightarrow{[f]}(y,f_!p)$ ($[f]$ is arbitrary). Proposition \ref{compatible} implies that $id:(Spec(\mathcal{C}),\tau _E)\to (Spec(\mathcal{C}), \tau _{sat})$ is cover-preserving (and its post-composition with $\#Y$ is of course flat), so we get a geometric embedding $Sh(Spec(\mathcal{C}), \tau _{sat})\to Sh(Spec(\mathcal{C}), \tau _E)$.

    By \cite[Theorem 1.1 and (1.7)]{10.1007/BFb0090947} the topos $Sh(Spec(\mathcal{C}),\tau _{sat})$ is prime-generated, each representable $Spec(\mathcal{C})(-,(x,p))$ as well as $Spec(\mathcal{C})(-,x)$ is a sheaf wrt.~$\tau _{sat}$, and $\varphi ^s$ sending $x$ to $Spec(\mathcal{C})(-,x)=Spec(\mathcal{C})(-,x)^{\#}$ is a coherent functor. By Theorem \ref{main2} we get a $p_{\infty}$-lex cocontinuous functor $\widetilde{(\varphi ^s)}^*: Sh(Spec(\mathcal{C}),\tau _E)\to Sh(Spec(\mathcal{C}),\tau _{sat})$ whose restriction to $Spec(\mathcal{C})$ is given by $(x,p)\mapsto \bigcap _{u\in p} Spec(\mathcal{C})(-,u) = Spec(\mathcal{C})(-,(x,p))$, so it coincides with the inverse image part of the geometric embedding from the previous paragraph.
\end{remark}

We glue all of our previous results into a single diagram:

\begin{theorem}
    \label{allthree}
Let $\mathcal{C}$ be a coherent category with finite disjoint coproducts and take any small (non-full) subcategory $K\subseteq \mathbf{Coh}(\mathcal{C},\mathbf{Set})$. Then there is a commutative diagram:
\[
\adjustbox{width=\textwidth}{
\begin{tikzcd}
	&& {\mathcal{C}} \\
	\\
	{Sh(Spec^{\neg}(\mathcal{C}),\tau _E)} &&&& {\mathbf{Set}^K} \\
	& {Sh(Spec(\mathcal{C}),\tau _{E})} && {Sh(Spec(\mathcal{C}),\tau _{sat})}
	\arrow["{\varphi ^{\neg}}"{description}, from=1-3, to=3-1]
	\arrow["ev"{description}, from=1-3, to=3-5]
	\arrow["\varphi"{description}, from=1-3, to=4-2]
	\arrow["{\varphi ^s}"{description}, from=1-3, to=4-4]
	\arrow["{\widetilde{ev}^*}"{description}, color={rgb,255:red,128;green,128;blue,128}, curve={height=-6pt}, dashed, from=3-1, to=3-5]
	\arrow["{\widetilde{\varphi }^*}"{description}, from=3-1, to=4-2]
	\arrow["{\widetilde{ev}^*}"{description, pos=0.4}, color={rgb,255:red,128;green,128;blue,128}, curve={height=-6pt}, dashed, from=4-2, to=3-5]
	\arrow[""{name=0, anchor=center, inner sep=0}, "{\widetilde{(\varphi ^s)}^*}"{description}, from=4-2, to=4-4]
	\arrow["{\widetilde{ev}^*}"{description}, color={rgb,255:red,128;green,128;blue,128}, curve={height=-6pt}, dashed, from=4-4, to=3-5]
	\arrow[""{name=1, anchor=center, inner sep=0}, shift left=2, color={rgb,255:red,179;green,179;blue,179}, curve={height=-18pt}, hook', from=4-4, to=4-2]
	\arrow["\dashv"{anchor=center, rotate=-88}, color={rgb,255:red,179;green,179;blue,179}, draw=none, from=0, to=1]
\end{tikzcd}
}
\]
The four maps out of $\mathcal{C}$ are coherent, the $\varphi $'s are conservative, and the maps with superscript $*$ are lex cocontinuous. 

(If we erase $Sh(Spec^{\neg }(\mathcal{C}),\tau _E)$ then $\mathcal{C}$ can be any coherent category.)
\end{theorem}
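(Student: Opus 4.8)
The plan is to build the entire diagram from universal properties already in hand, reading every comparison map as the essentially unique lex cocontinuous extension of a coherent functor along one of the $\varphi$'s, and to deduce all commutativities from the uniqueness clauses of those universal properties rather than from any fresh computation.

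First I would check that the four legs out of $\mathcal{C}$ are coherent. For $\varphi^{\neg}$, $\varphi$ and $\varphi^s$ this is Theorem \ref{main1}(i), Theorem \ref{main2}(i) and Remark \ref{makkaiversion} (via \cite{10.1007/BFb0090947}). For $ev:\mathcal{C}\to \mathbf{Set}^K$, $x\mapsto (M\mapsto Mx)$, coherence is computed pointwise in the presheaf topos $\mathbf{Set}^K$, and each object $M\in K$ is by hypothesis a coherent functor $\mathcal{C}\to\mathbf{Set}$, so $ev$ is coherent.

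Next I would produce the comparison maps, the point being that each codomain satisfies exactly the generation hypothesis that its extension theorem requires. The topos $Sh(Spec(\mathcal{C}),\tau_E)$ is locally connected by Theorem \ref{weaklylc}, so Theorem \ref{main1}(iii) furnishes the essentially unique lex cocontinuous $\widetilde{\varphi}^*$ with $\widetilde{\varphi}^*\circ\varphi^{\neg}\cong\varphi$. The topos $Sh(Spec(\mathcal{C}),\tau_{sat})$ is prime-generated (Makkai, recalled in Remark \ref{makkaiversion}), so Theorem \ref{main2}(ii) furnishes $\widetilde{(\varphi^s)}^*$ with $\widetilde{(\varphi^s)}^*\circ\varphi\cong\varphi^s$; Remark \ref{makkaiversion} already identifies this with the inverse image of the geometric embedding $Sh(Spec(\mathcal{C}),\tau_{sat})\hookrightarrow Sh(Spec(\mathcal{C}),\tau_E)$, supplying the adjunction drawn in the lower row. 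Finally $\mathbf{Set}^K$ is a presheaf topos, hence at once locally connected and prime-generated, so all three maps $\widetilde{ev}^*$ exist and restrict to $ev$: along $\varphi^{\neg}$ by Theorem \ref{main1}(iii), along $\varphi$ by Theorem \ref{main2}(ii), and along $\varphi^s$ by \cite[Theorem 1.1]{10.1007/BFb0090947}.

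The commutativities then follow purely from uniqueness. The triangles over $\mathcal{C}$ hold by construction, and every map with a superscript $*$ is lex cocontinuous, being an inverse image or a composite of such. For the transitivity square on the left I would note that both $\widetilde{ev}^*$ (out of $Sh(Spec^{\neg}(\mathcal{C}),\tau_E)$) and the composite $\widetilde{ev}^*\circ\widetilde{\varphi}^*$ are lex cocontinuous functors whose precomposition with $\varphi^{\neg}$ is isomorphic to $ev$ (using $\widetilde{\varphi}^*\circ\varphi^{\neg}\cong\varphi$ and $\widetilde{ev}^*\circ\varphi\cong ev$); since $\mathbf{Set}^K$ is locally connected, $(\varphi^{\neg})^{\circ}$ is an equivalence by Theorem \ref{main1}(iii), forcing the two to be isomorphic. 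The square through $\tau_{sat}$ is handled identically, this time using that $\varphi^{\circ}$ is an equivalence on $Fun^*_{p_\infty}$ by Theorem \ref{main2}(ii) (one checks routinely that the relevant composites again land in $Fun^*_{p_\infty}$). Conservativity of the three $\varphi$'s is the content of Remark \ref{varphigeomsurj} and the analogous remark after Theorem \ref{main2}, together with \cite{10.1007/BFb0090947} for $\varphi^s$: each induces a geometric surjection onto $Sh(\mathcal{C})$, so composing the conservative canonical functor $\mathcal{C}\to Sh(\mathcal{C})$ with the conservative inverse image yields a conservative functor.

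I expect the genuine work to be bookkeeping rather than new mathematics: verifying at each node that the codomain meets the hypothesis of the extension theorem being invoked — which is precisely why Theorem \ref{weaklylc} and Makkai's prime-generatedness were established — and confirming that the various extensions agree through the correct equivalence. The closing parenthetical is then immediate, since deleting $Sh(Spec^{\neg}(\mathcal{C}),\tau_E)$ removes every appeal to Theorem \ref{main1}, the only place where finite disjoint coproducts (or Booleanness) on $\mathcal{C}$ is used.
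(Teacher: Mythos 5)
Your proposal is correct and follows essentially the same route as the paper's proof: obtain $\widetilde{\varphi}^*$ from Theorem \ref{main1} via Theorem \ref{weaklylc}, take $\widetilde{(\varphi^s)}^*$ from Remark \ref{makkaiversion}, get the three $\widetilde{ev}^*$ from Theorem \ref{main1}, Theorem \ref{main2} and Makkai's theorem using that $\mathbf{Set}^K$ is prime-generated (hence locally connected), and deduce the commutativity of the triangles with two dashed faces from the uniqueness (full faithfulness) clauses of the extension theorems. You in fact spell out several points the paper leaves implicit (coherence of $ev$, the conservativity arguments, and the check that the composite through $\tau_{sat}$ lands in $Fun^*_{p_\infty}$), which is consistent with, not divergent from, the paper's argument.
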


\begin{proof}
    We get $\widetilde{\varphi }^*$ by Theorem \ref{main1} as $Sh(Spec(\mathcal{C}),\tau _E)$ is locally connected by Theorem \ref{weaklylc}. The map $(\widetilde{\varphi ^s})^*$ is described in Remark \ref{makkaiversion}. The extensions $\widetilde{ev}^*$ exist by Theorem \ref{main1}, Theorem \ref{main2} and \cite[Theorem 1.1]{10.1007/BFb0090947}, as $\mathbf{Set}^K$ is prime-generated (in particular it is locally connected). Triangles with two dashed faces commute, by the uniqueness of lex cocontinuous/ $p_{\infty }$-lex cocontinuous extensions. 
\end{proof}

\begin{proposition}
Let $\mathcal{C}$ be a countable coherent category, such that for any $x\in \mathcal{C}$ the set $S_{\mathcal{C}}(x)=Spec(Sub_{\mathcal{C}}(x))$ is countable. 
\begin{itemize}
    \item[i)] If $\mathcal{C}$ has finite disjoint coproducts then $Sh(Spec^{\neg }(\mathcal{C}),\tau _E)$ has enough points and $\widetilde{\varphi }^*$ is conservative.
    \item[ii)] $Sh(Spec(\mathcal{C}),\tau _E)$ has enough points.
\end{itemize}
\end{proposition}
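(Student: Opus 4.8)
The plan is to reduce both parts to a single principle: \emph{a Grothendieck topos presented by a countable site whose topology is generated by countably many countable covering families has enough points}, proved by a Rasiowa--Sikorski construction of generic points. For this I first note that the points are entirely combinatorial. By the equivalences $Fun^*(Sh(Spec^{\neg}(\mathcal{C})),\mathbf{Set})\simeq E\text{-}Flat(Spec^{\neg}(\mathcal{C}),\mathbf{Set})$ and $Fun^*(Sh(Spec(\mathcal{C})),\mathbf{Set})\simeq E\text{-}Flat(Spec(\mathcal{C}),\mathbf{Set})$ used in the proofs of Theorem \ref{main1} and Theorem \ref{main2} (the case $\mathcal{E}=\mathbf{Set}$), the points of either topos are exactly the flat, $E$-preserving functors on the corresponding spectrum. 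So all the work is done on the site, and I need only produce enough such functors.

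First I would verify the sites are countable. For part (ii) this is immediate: the objects $(x,p)$ of $Spec(\mathcal{C})$ range over $x\in\mathcal{C}$ and $p\in S_{\mathcal{C}}(x)$, both countable by hypothesis, while the morphisms are germs of partial maps, of which there are countably many since $\mathcal{C}$ is countable. For part (i) I must first see that $S^{\neg}_{\mathcal{C}}(x)$ is countable. Restricting a prime filter on $Sub_{\mathcal{C}}(x)$ to the Boolean subalgebra $Sub^{\neg}_{\mathcal{C}}(x)$ of complemented subobjects yields an ultrafilter, since $u\vee u^{c}=x$ forces one of $u,u^{c}$ into any prime filter; conversely every ultrafilter on $Sub^{\neg}_{\mathcal{C}}(x)$ generates a proper filter on $Sub_{\mathcal{C}}(x)$, which extends to a prime filter by the prime filter theorem. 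Hence the restriction $S_{\mathcal{C}}(x)\twoheadrightarrow S^{\neg}_{\mathcal{C}}(x)$ is surjective and $|S^{\neg}_{\mathcal{C}}(x)|\le|S_{\mathcal{C}}(x)|\le\aleph_0$, so $Spec^{\neg}(\mathcal{C})$ is countable as well. In both cases an $E$-family is indexed by $\{p_i: f_!p_i=q\}$ (resp.\ $f_!p_i\ge q$), a subset of a countable type space, hence countable, and the effective epis $f$ together with the targets $q$ range over countable sets; so $\tau_E$ is generated by countably many countable families. By Proposition \ref{Ebar} these generators suffice, and a flat functor is $\tau_E$-continuous as soon as it sends each $E$-family to a jointly epimorphic family.

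Now I would construct the points. A point is a flat $E$-continuous functor, which (since $(\int F)^{op}$ being filtered is exactly flatness over $\mathbf{Set}$) is a filtered colimit of representables, presentable by a chain $(z_0,r_0)\xleftarrow{}(z_1,r_1)\xleftarrow{}\cdots$ in the spectrum. One builds such a chain by interleaving the countably many requirements: the flatness conditions of Definition \ref{flat}, tested against the empty diagram, pairs of objects, and parallel pairs (completing cones at the current tip), and, for each of the countably many $E$-families refining the current tip, the condition that the chain be extended along one of its legs, which makes the colimit send that family to a jointly epimorphic one. Each requirement is dense (always satisfiable by a further extension), and there are only countably many, so Rasiowa--Sikorski produces a generic chain whose colimit is a flat $E$-preserving functor, i.e.\ a point. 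The construction can moreover be started at a prescribed object and arranged to avoid any prescribed \emph{non}-covering sieve there (its non-coveringness provides a branch escaping the sieve), so the resulting point witnesses that the sieve fails to cover; ranging over all objects and all non-covering sieves yields a jointly conservative family. I expect the compatibility of the flatness and continuity requirements — and the verification that the generic colimit is a genuine $E$-flat functor rather than merely a presheaf-theoretic limit — to be the main obstacle; alternatively one may invoke the classical fact that separable toposes have enough points.

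Finally, for the conservativity of $\widetilde{\varphi}^*$ in part (i), I would combine the enough points just obtained with the commuting triangle $\widetilde{\varphi}^*\circ\varphi^{\neg}=\varphi$ of Theorem \ref{allthree}. Every point of $Sh(Spec^{\neg}(\mathcal{C}),\tau_E)$ is of the form $M^{\neg}$ for a model $M\colon\mathcal{C}\to\mathbf{Set}$ (Theorem \ref{main1}(iii)), and $M$ also yields a $p_{\infty}$-point $M^{sp}$ of $Sh(Spec(\mathcal{C}),\tau_E)$ with $M^{sp}\circ\varphi\cong M$ (Theorem \ref{main2}). Since $M^{sp}\circ\widetilde{\varphi}^*$ is lex cocontinuous and restricts along $\varphi^{\neg}$ to $M^{sp}\circ\varphi\cong M$, the full faithfulness of $\varphi^{\circ}$ (Theorem \ref{main1}(ii)) forces $M^{sp}\circ\widetilde{\varphi}^*\cong M^{\neg}$. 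Therefore, if $\widetilde{\varphi}^*\theta$ is an isomorphism, then $M^{\neg}\theta\cong M^{sp}(\widetilde{\varphi}^*\theta)$ is an isomorphism for every $M$; as the $M^{\neg}$ are jointly conservative by the first half, $\theta$ is an isomorphism. This establishes that $\widetilde{\varphi}^*$ is conservative and completes part (i), part (ii) being the $Spec(\mathcal{C})$ instance of the same enough-points construction.
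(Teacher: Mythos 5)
Correct, and essentially the paper's own route: the paper observes that the cardinality hypotheses make both sites separable (your surjection $S_{\mathcal{C}}(x)\twoheadrightarrow S^{\neg}_{\mathcal{C}}(x)$ fills in a detail it leaves implicit) and then simply cites \cite[Theorem 6.2.4]{makkai} for enough points, which is exactly your fallback --- the Rasiowa--Sikorski construction you sketch, acknowledged loose ends included, is just the proof of that cited theorem, so you should lean on the citation rather than the sketch. For conservativity of $\widetilde{\varphi}^*$ the paper runs your argument bundled rather than pointwise: it takes a single conservative lex cocontinuous $\langle M_i^*\rangle_i\colon Sh(Spec^{\neg}(\mathcal{C}),\tau_E)\to \mathbf{Set}^I$, extends the composite coherent functor $\mathcal{C}\to\mathbf{Set}^I$ through $Sh(Spec(\mathcal{C}),\tau_E)$ via Theorem \ref{main2} (with $\mathbf{Set}^I$ prime-generated), and concludes $\overline{M}\circ\widetilde{\varphi}^*\cong\langle M_i^*\rangle_i$ by uniqueness of lex cocontinuous extensions along $\varphi^{\neg}$ (Theorem \ref{main1}), which is precisely your step $M^{sp}\circ\widetilde{\varphi}^*\cong M^{\neg}$ performed for all points at once.
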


\begin{proof}
    By the cardinality assumption, the sites $(Spec^{\neg }(\mathcal{C}),E)$ and $(Spec(\mathcal{C}),E)$ are separable. Hence by \cite[Theorem 6.2.4]{makkai} $Sh(Spec^{\neg }(\mathcal{C}),\tau _E)$ and $Sh(Spec(\mathcal{C}),\tau _E)$ have enough points. 

    Let $\langle M_i^*\rangle _i:Sh(Spec^{\neg }(\mathcal{C}))\to \mathbf{Set}^I$ be conservative, lex, cocontinuous. Then we get a lex cocontinuous extension $\overline{M}$ in
\[\begin{tikzcd}
	&& {Sh(Spec(\mathcal{C}),\tau _E)} \\
	{\mathcal{C}} &&&& {\mathbf{Set}^I} \\
	&& {Sh(Spec^{\neg}(\mathcal{C}),\tau _E)}
	\arrow["{\overline{M}}", from=1-3, to=2-5]
	\arrow[""{name=0, anchor=center, inner sep=0}, "{\varphi }", from=2-1, to=1-3]
	\arrow[""{name=1, anchor=center, inner sep=0}, "{\varphi ^{\neg}}"', from=2-1, to=3-3]
	\arrow["{\widetilde{\varphi }^*}"', from=3-3, to=1-3]
	\arrow["{\langle M_i^*\rangle _i}"', from=3-3, to=2-5]
	\arrow["{=}", draw=none, from=0, to=1]
\end{tikzcd}\]
    such that the outer square commutes. By the uniqueness of lex cocontinuous extensions the right hand triangle commutes. It follows that $\widetilde{\varphi }^*$ is conservative. 
\end{proof}

\begin{question}
    Is $\widetilde{\varphi }^*$ conservative when $\mathcal{C}$ is an arbitrary coherent category with finite disjoint coproducts?
\end{question}

\printbibliography

\end{document}